\newcommand{\R}{\mathbb{R}}
\newcommand{\E}{\mathbb{E}}
\newcommand{\N}{\mathbb{N}}
\renewcommand{\P}{\mathbb{P}}
\newcommand{\1}{\mathbbm{1}}
\newcommand{\todistr}{\overset{d}{\underset{n\to\infty}\longrightarrow}}
\newcommand{\todistrt}{\overset{d}{\underset{t\to\infty}\longrightarrow}}
\newcommand{\toprobab}{\overset{P}{\underset{n\to\infty}\longrightarrow}}
\newcommand{\toprobabt}{\overset{P}{\underset{t\to\infty}\longrightarrow}}
\newcommand{\toas}{\overset{a.s.}{\underset{n\to\infty}\longrightarrow}}
\newcommand{\toast}{\overset{a.s.}{\underset{t\to\infty}\longrightarrow}}
\theoremstyle{plain}
\newtheorem{theorem}{Theorem}[section]
\newtheorem{lemma}[theorem]{Lemma}
\newtheorem{corollary}[theorem]{Corollary}
\newtheorem{proposition}[theorem]{Proposition}
\theoremstyle{definition}
\newtheorem{example}[theorem]{Example}
\theoremstyle{remark}
\newtheorem{remark}[theorem]{Remark}
\begin{document}

\title[Moderate parts in regenerative compositions: the case of regular variation]{Moderate parts in regenerative compositions:\\ the case of regular variation}

\author{Dariusz Buraczewski}
\address{Dariusz Buraczewski, Mathematical Institute University of Wroclaw, Pl. Grunwaldzki 2/4
50-384 Wroclaw, Poland}\email{dbura@math.uni.wroc.pl}

\author{Bohdan Dovgay}
\address{Bohdan Dovgay, Faculty of Computer Science and Cybernetics, Taras Shev\-chen\-ko National University of Kyiv, 01601 Kyiv, Ukraine}
\email{bogdov@gmail.com}

\author{Alexander Marynych}
\address{Alexander Marynych, Faculty of Computer Science and Cybernetics, Taras Shev\-chen\-ko National University of Kyiv, 01601 Kyiv, Ukraine}
\email{marynych@unicyb.kiev.ua}

\begin{abstract}
A regenerative random composition of integer $n$ is constructed by allocating $n$ standard exponential points over a countable number of intervals, comprising the complement of the closed range of a subordinator $S$. Assuming that the L\'{e}vy measure of $S$
is infinite and regularly varying at zero of index $-\alpha$, $\alpha\in(0,\,1)$, we find an explicit threshold  $r=r(n)$, such that the number $K_{n,\,r(n)}$ of blocks of size $r(n)$ converges in distribution without any normalization to a mixed Poisson distribution. The sequence $(r(n))$ turns out to be regularly varying with index $\alpha/(\alpha+1)$ and the mixing distribution is that of the exponential functional of $S$. \textcolor{black}{The result is derived as a consequence of a general Poisson limit theorem for an infinite occupancy scheme with power-like decay of the frequencies.} We also discuss asymptotic behavior of $K_{n,\,w(n)}$ in cases when $w(n)$ diverges but grows slower than $r(n)$. Our findings complement previously known strong laws of large numbers for $K_{n,\,r}$ in case of a fixed $r\in\N$. As a key tool we employ new Abelian theorems for Laplace--Stiletjes transforms of regularly varying functions with the indexes of regular variation diverging to infinity. 
\end{abstract}

\keywords{Abelian theorem, infinite occupancy scheme, Poisson limit theorem, regenerative composition, regular variation}

\subjclass[2010]{Primary: 60C05;  Secondary: 60F05}

\maketitle

\section{Introduction and main results}
Let $S=(S(t))_{t\geq 0}$ be a drift-free subordinator with no killing and a L\'{e}vy measure $\nu$ on $(0,\infty)$. The classical It\^{o} decomposition reads as
$$
S(t)=\sum_{k:\tau_k\leq t}j_k,\quad t\geq 0,
$$
with probability one, where $\sum\delta_{(\tau_k,j_k)}$ is a Poisson point process with values in $[0,\,\infty)\times (0,\, \infty)$ and intensity measure ${\bf LEB}\times \nu$. Here $\delta_x$ is a Dirac point measure at $x$ and ${\bf LEB}$ is the standard Lebesgue measure on $[0,\,\infty)$.  A random closed subset of $[0,\,\infty)$ defined by $\mathcal{R}:={\rm cl}{\{S(t):t\geq 0\}}$ is called the range of $S$ and the open complement $\mathcal{R}^c:=[0,\,\infty)\setminus \mathcal{R}$ can be expressed as the union $\mathcal{R}^c=\cup_{k}(S(\tau_k-),\,S(\tau_k))=\cup_{k}(S(\tau_k-),\,S(\tau_k-)+j_k)$ of countably many open intervals, where $\{\tau_k\}$ is the set of jump epochs of $S$. We call the disjoint intervals comprising $\mathcal{R}^c$ {\it boxes}. Further, let $(E_k)_{k\in\N}$ be a sequence of independent copies of a random variable $E$ with the standard exponential distribution, and $(E_k)_{k\in\N}$ is independent of $S$. The points $(E_k)_{k\in\N}$ are called {\it balls} and are dropped one by one on $[0,\,\infty)$. Since the Lebesgue measure of $\mathcal{R}$ is zero with probability one, see Proposition 1.8 in \cite{Bertoin:1997}, each ball $E_j$ with probability one falls into one of the boxes $(S(\tau_k-),\,S(\tau_k))$. \textcolor{black}{Restricting attention to the first $n$ balls, we let $\mathcal{C}_n$ denote the vector of nonzero occupancy numbers of the intervals $(S(\tau_k-),\,S(\tau_k))$, written in their natural order, that is, the first coordinate of $\mathcal{C}_n$ is the number of balls in the first occupied interval of $\mathcal{R}^c$ counting from left to right, the second component of $\mathcal{C}_n$ is the number of balls in the next occupied interval and so on. When $n$ varies, a family $(\mathcal{C}_n)_{n\in\N}$ defines a coherent sequence of random compositions in the following sense. For every $n\in\N$, the sum of coordinates of $\mathcal{C}_n$ is equal to $n$ and one can pass from the composition of $n$ given by $\mathcal{C}_{n}$ to the composition of $n-1$ defined by $\mathcal{C}_{n-1}$ be removing the point $E_n$ from a box it occupies.}

The family $(\mathcal{C}_n)_{n\in\N}$ possesses a distinguishing property called {\it regeneration} inherited from the regenerative property of the set $\mathcal{R}$ combined with the memoryless property of the exponential distribution. Consider composition $\mathcal{C}_n$ of integer $n$ and suppose that the first summand is equal to $m<n$, then deleting this part yields a composition on $n-m$ which has the same distribution as $\mathcal{C}_{n-m}$. In view of this property, the sequence
$(\mathcal{C}_n)_{n\in\N}$ is called {\it regenerative composition structure}, as introduced in \cite{Gnedin+Pitman:2005}.

For $n,k,r\in\N$, set
$$
\mathcal{Z}_{n,\,k}:=\#\{1\leq j\leq n:E_j\in (S(\tau_k-),\,S(\tau_k))\}\quad\text{and}\quad K_{n,\,r}:=\sum_{k\geq 1}\1_{\{\mathcal{Z}_{n,\,k}=r\}}.
$$
Thus, $K_{n,\,r}$ is the number of boxes occupied by exactly $r$ balls and $K_n:=\sum_{r=1}^{n}K_{n,\,r}$ is the total number of occupied boxes. The asymptotic analysis, as $n\to\infty$, of $K_n$ and $K_{n,\,r}$ for regenerative compositions has received a considerable attention in the past decades. The model exhibits a wide range of possible asymptotic regimes depending on the tail behavior of the governing L\'{e}vy measure $\nu$. It is common to distinguish three situations:
\begin{itemize}
\item[(i)] the case of finite $\nu$ in which the corresponding construction is called {\it the Bernoulli sieve};
\item[(ii)] the case where $\nu$ is infinite and the function $y\mapsto \nu([y,\,\infty))$ is slowly varying at zero;
\item[(iii)] the case where $\nu$ is infinite and the function $y\mapsto \nu([y,\,\infty))$ is regularly varying at zero with index $-\alpha$, $\alpha\in (0,\,1)$.
\end{itemize}
In case (i), in which the subordinator $S$ is a compound Poisson process, further subdivision stems from the tail behavior (both at zero and infinity) of the distribution of the generic jump of $S$. For example, if this distribution has finite mean and is nonlattice, then there exists a nondegenerate random vector $(K_1,K_2,\ldots)$ such that
\begin{equation}\label{eq:k_n_r_bs}
(K_{n,\,1},K_{n,\,2},\ldots)\todistr (K_1,K_2,\ldots),
\end{equation}
see Theorem 3.3 in \cite{Gnedin+Iksanov+Roesler:2008}. Under the additional assumption that the distribution of the generic jump of $S$ belongs to a domain of attraction of a stable law with the index of stability lying in $(1,2]$, the total number of occupied boxes $K_n$, properly centred and normalized, converges in distribution to a stable law, see \cite{Gnedin:2004} and \cite{Gnedin+Iksanov+Marynych:2010}. Furthermore, a functional limit theorem for the process $[0,\,1]\ni z\mapsto \sum_{r\leq n^z}K_{n,\,r}$, is also available, see Theorem 2.2 in \cite{Alsmeyer+Iksanov+Marynych:2017}. In particular, the aforementioned results demonstrate that the main contribution to $K_n$ in case (i) is given by $K_{n,\,r}$'s with $r$ lying between $n^{a}$ and $n^{b}$, $0<a<b\leq 1$, whereas small counts $K_{n,\,r}$, with $r$ fixed, are negligible. A completely different picture occurs in case (iii) in which $K_n$ has the same magnitude as $K_{n,\,r}$, $r=1,2,\ldots$ in the following sense. \textcolor{black}{If $\nu([y,\,\infty))~\sim~y^{-\alpha}\ell(1/y)$, as $y\downarrow 0$, where $\ell$ is a slowly varying at infinity function, then
\begin{equation}\label{eq:k_n_r_reg_var}
\frac{K_n}{\Gamma(1-\alpha)n^{\alpha}\ell(n)}\toas \mathcal{I}_{\alpha}\quad\text{and}\quad \frac{K_{n,\,r}}{\Gamma(1-\alpha)n^{\alpha}\ell(n)}\toas (-1)^{r-1}\binom{\alpha}{r} \mathcal{I}_{\alpha},\quad r\in\N,
\end{equation}}
where $\mathcal{I}_{\alpha}$ is the exponential functional of the subordinator $S$, that is,
\begin{equation}\label{eq:exp_func_def}
\mathcal{I}_{\alpha}:=\int_0^{\infty}e^{-\alpha S(\tau)}{\rm d}{\tau},
\end{equation}
see Theorem 4.1 in \cite{Gnedin+Pitman+Yor:2006}. Intermediate regimes in case (ii), in which $K_{n,\,r}$ diverges but grows slower than $K_n$, see \cite{Gnedin+Barbour:2006}, \cite{Gnedin+Iksanov:2012} and \cite{Gnedin+Pitman+Yor:2006-2}, make the picture even more diverse.

The main purpose of this paper is a further investigation of regenerative compositions in case (iii). A natural question arising while comparing \eqref{eq:k_n_r_bs} and \eqref{eq:k_n_r_reg_var} is the following. What is a threshold $r=r(n)$ such that $K_{n,\,r(n)}$ converges without any normalization, as $n\to\infty$, to a finite nondegenerate limit and what is the limit? \textcolor{black}{An intuitive answer to the first part, which can be guessed from \eqref{eq:k_n_r_reg_var} is: pick $r=r(n)$ such that
\begin{equation}\label{eq:r_naive}
\lim_{n\to\infty}\Gamma(1-\alpha)n^{\alpha}\ell(n)(-1)^{r(n)-1}\binom{\alpha}{r(n)}=\lim_{n\to\infty}\frac{\alpha \Gamma(r(n)-\alpha)n^{\alpha}\ell(n)}{\Gamma(1+r(n))}=\lim_{n\to\infty}\frac{\alpha n^{\alpha}\ell(n)}{(r(n))^{1+\alpha}}=1.
\end{equation}
This answer turns out to be {\it almost} correct as out first main result demonstrates.} Throughout the paper the notation $f(t)~\sim~g(t)$ is used to denote asymptotic equivalence, that is, $\lim_{t\to\infty}f(t)/g(t)=1$.

\begin{theorem}\label{thm:main}
Assume that $S$ is a drift-free subordinator with no killing and a L\'{e}vy measure $\nu$ on $(0,\infty)$ satisfying
\begin{equation}\label{eq:reg_var_nu}
\nu([y,\,\infty))~\sim~y^{-\alpha}\ell(1/y),\quad y\downarrow 0,
\end{equation}
for some $\alpha \in (0,1)$ and a slowly varying at infinity function $\ell$. Let $r=r(t)$ by any positive function such that
\begin{equation}\label{eq:r_def}
\lim_{t\to\infty}\frac{\alpha t^{\alpha}}{(r(t))^{\alpha+1}}\ell\left(\frac{t}{r(t)}\right)=1,
\end{equation}
and $r_i=r_i(t)$, $i=1,\ldots,m$ be integer-valued functions such that $r_i(t)~\sim~u_i r(t)$ for some $0<u_1<u_2<\cdots<u_m<\infty$, as $t\to\infty$. Then
$$
\left(K_{n,\,r_1(n)},K_{n,\,r_2(n)},\ldots,K_{n,\,r_m(n)}\right) \todistr (\mathcal{P}_1,\mathcal{P}_2,\ldots,\mathcal{P}_m),
$$
where, given the subordinator $S$, $(\mathcal{P}_i)_{i=1,\ldots,m}$ are mutually independent Poisson random variables with $\E \left[\mathcal{P}_i |S\right] = u_i^{-\alpha-1}\mathcal{I}_{\alpha}$ and $\mathcal{I}_{\alpha}$ is defined by \eqref{eq:exp_func_def}.
\end{theorem}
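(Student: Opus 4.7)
The plan is to derive Theorem~\ref{thm:main} from the general Poisson limit theorem for infinite occupancy schemes announced in the abstract, by conditioning on the subordinator $S$. Given $S$, the regenerative composition reduces to a classical infinite occupancy scheme with deterministic frequencies
\[
p_k = e^{-S(\tau_k-)} - e^{-S(\tau_k)}, \quad k\geq 1,
\]
which telescope to $1$: a standard exponential ball lands in box $k$ with probability exactly $p_k$. Hence the conditional law of $(K_{n,r_1(n)},\ldots,K_{n,r_m(n)})$ given $S$ is that of the occupancy counts in a classical multinomial scheme with the (now deterministic) frequencies $(p_k)$.

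My aim is to verify, conditionally on $S$, the hypotheses of the general Poisson theorem and to identify the limiting means. The main computational input is the asymptotics of the conditional mean
\[
\E[K_{n,r}\mid S] = \sum_k \binom{n}{r} p_k^r (1-p_k)^{n-r} \approx \frac{1}{r!}\sum_k (np_k)^{r} e^{-np_k}.
\]
Using Campbell/Mecke's formula for the Poisson point process $\{(\tau_k, j_k)\}$, the analogue of this sum can be written (up to vanishing boundary errors) as an integral
\[
\int_0^\infty \int_0^\infty \frac{(n e^{-S(\tau)}(1-e^{-j}))^r}{r!}\, e^{-n e^{-S(\tau)}(1-e^{-j})} \, \nu(\mathrm{d} j)\, \mathrm{d}\tau.
\]
For $r = r_i(n) \sim u_i r(n)$ with \eqref{eq:r_def}, the Abelian theorems for regularly varying $\nu$ (another advertised contribution of the paper) let us evaluate the inner integral against $\nu$: a standard change of variables $u = 1-e^{-j}$ and a Laplace-type saddle around $u \approx r(n)/(n e^{-S(\tau)})$ produce a factor $u_i^{-\alpha-1}e^{-\alpha S(\tau)}$, and integrating in $\tau$ gives $u_i^{-\alpha-1}\mathcal{I}_\alpha$, matching the theorem statement.

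Once the first-moment asymptotics are in place, joint conditional convergence to independent Poissons will follow from a factorial-moment / Chen--Stein argument: the second conditional factorial moment of $K_{n,r_i(n)}$ is asymptotically the square of the conditional mean (the event that two distinct boxes each receive exactly $r_i(n)$ balls factorizes in the limit), while cross moments for distinct sizes $r_i(n) \neq r_j(n)$ factorize automatically because no box contributes to both counts. Because the conditioning is on the full $\sigma$-algebra generated by $S$, the stated mixed Poisson limit is then immediate.

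\textbf{Main obstacle.} The principal technical challenge is the asymptotic evaluation of $\sum_k (np_k)^{r(n)} e^{-np_k}$ when both $n$ and $r(n)$ tend to infinity: the regular variation of $\nu$ has a fixed index $-\alpha$, but the exponent $r(n)$ of the summands grows. This is exactly the situation addressed by the new Abelian theorems with diverging indices of regular variation mentioned in the abstract. A further subtlety is that the frequency $p_k = e^{-S(\tau_k-)}(1-e^{-j_k})$ is a composite of the cumulative process $S$ and the single jump $j_k$; unwinding this product via Campbell's formula (and monitoring uniformity with respect to the random weight $e^{-\alpha S(\tau)}$) is what injects the integral $\int_0^\infty e^{-\alpha S(\tau)}\,\mathrm{d}\tau = \mathcal{I}_\alpha$ into the limiting mean and thereby produces the mixed Poisson distribution of the theorem.
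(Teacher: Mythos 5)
Your proposal correctly identifies the overall architecture — condition on $S$, apply a general Poisson limit theorem for the Karlin occupancy scheme, feed it the Abelian lemmas to identify the limiting means — and it correctly sees that the factor $\mathcal{I}_\alpha$ enters because the random weight $e^{-\alpha S(\tau)}$ survives into the conditional asymptotics. However, there are two concrete gaps that the paper handles carefully and your sketch elides.

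\textbf{Conditional versus unconditional mean.} You invoke Campbell/Mecke to rewrite $\sum_k \frac{(np_k)^r}{r!}e^{-np_k}$ as an integral against $\nu(\mathrm{d}j)\,\mathrm{d}\tau$. But the Slivnyak--Mecke formula produces the \emph{unconditional} mean $\E[K_{n,r}]$ — it averages over the point process $\{(\tau_k,j_k)\}$, so the correct output has an expectation inside the integral, namely $\int\!\!\int \E[\cdots]\,\nu(\mathrm{d}j)\,\mathrm{d}\tau$. What you actually need, to feed into a conditional Poisson limit theorem and produce a \emph{mixed} Poisson law, is the a.s.\ asymptotics of the \emph{conditional} mean $\E[K_{n,r}\mid S]=\sum_k\cdots$, which is a deterministic sum once $S$ is fixed; Mecke's formula does not touch it. The paper instead uses the pathwise regular variation $\rho(x)\sim x^{-\alpha}\ell(1/x)\,\mathcal{I}_\alpha$ a.s.\ as $x\downarrow 0$ (Theorem~5.1 of Gnedin--Pitman--Yor, quoted as \eqref{eq:gne_pit_yor_conv}), and then applies Lemma~\ref{lem:karamata_ls} pathwise to the Laplace--Stieltjes transform of $\rho$. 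Your integral expression is the right heuristic but the wrong formal device: you need a quenched statement, not an annealed one.

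\textbf{DePoissonization is missing and is the hard part.} Your proposal works directly with the binomial point probabilities via the informal approximation $\binom{n}{r}p^r(1-p)^{n-r}\approx\frac{(np)^r}{r!}e^{-np}$ and then proposes a factorial-moment or Chen--Stein argument to get joint Poisson convergence in the fixed-$n$ multinomial scheme, where the occupancy counts are \emph{not} conditionally independent. The paper avoids these two difficulties simultaneously by Poissonizing first — letting balls arrive at the epochs of an independent rate-one Poisson process $\Pi$, so that the boxes become conditionally independent and the generating function factorizes exactly — proving the limit in that model (Theorem~\ref{thm:main_ks_poisson} and Proposition~\ref{prop:main_poisson}), and then performing a delicate dePoissonization in Section~\ref{sec:proof3}. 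That dePoissonization is not a routine step: because $n\mapsto K_{n,r}$ is not monotone, the paper must control $\sup_{s\in[t-T\sqrt{t},\,t+T\sqrt{t}]}|K_{\Pi(s),r}-K_{\Pi(t),r}|$ via Poisson tail bounds from Glynn and the Abelian Lemma~\ref{lem:karamata_ls} again, and this is precisely where the restriction $\alpha<1$ enters (as opposed to $\alpha^*\in(0,1]$ in the Poissonized Theorem~\ref{thm:main_ks_poisson}). Your sketch does not address this at all, and a Chen--Stein route in the fixed-$n$ model would have to redo essentially the same second-moment estimates in a setting where the summands are dependent, which is harder, not easier.

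In short: the skeleton is right, but you have conflated conditional and unconditional expectation at the key computational step, and you have not engaged with the lack of monotonicity of $n\mapsto K_{n,r}$ that makes dePoissonization the technically central piece of the argument.
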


\begin{remark}\label{rem:rer_var_r}
The existence and uniqueness (up to asymptotic equivalence) of a function $r$ satisfying \eqref{eq:r_def} follows by a standard argument involving de Bruijn conjugates. Put $\ell_1(t):=\alpha^{1/(\alpha+1)}\ell^{1/(\alpha+1)}(t)$. Then $\ell_1$ is slowly varying at infinity and thus possesses a de Bruijn conjugate, say $\ell_1^{\#}$, see Theorem 1.5.13 in \cite{BGT}, such that $\lim_{t\to\infty}\ell_1^{\#}(t)\ell_1(t\ell_1^{\#}(t))=1$. Then $r(t)=t^{\alpha/(\alpha+1)}/\ell_1^{\#}(t^{1/(\alpha+1)})$ satisfies \eqref{eq:r_def}. In particular, $r$ is regularly varying at infinity with index $\alpha/(\alpha+1)\in (0,\,1/2)$. \textcolor{black}{Note that $r$ defined by the last equality in \eqref{eq:r_naive}, and $r$ defined by \eqref{eq:r_def}, always have the same index of regular variation $\alpha/(\alpha+1)$ but might have different slowly varying factors. This explains the usage of the word 'almost' after formula \eqref{eq:r_naive}. Is some cases, say when $\ell$ is a constant, both definitions are identical.}
\end{remark}

\begin{remark}
\textcolor{black}{We shall put Theorem \ref{thm:main} in a more general context of infinite occupancy schemes in Section \ref{sec:karlin}. In fact, Theorem \ref{thm:main} will be derived from a general Poisson limit theorem for infinite occupancy schemes, a result that might be of independent interest.}
\end{remark}

In order to formulate our next results, let us introduce the following classes of functions:
\begin{itemize}
\item $\mathcal{W}^{r}$, a class of positive functions defined in a neighborhood of infinity such that $w\in\mathcal{W}^r$ iff $w(t)=o(r(t))$ and $w(t)\to\infty$ as $t\to\infty$, where $r$ is defined by \eqref{eq:r_def};
\item $\mathcal{W}^{r}_{RV}$, a subclass of $\mathcal{W}^{r}$ comprised of functions which are regularly varying at infinity, necessarily with index of regular variation in $[0,\,\alpha/(\alpha+1)]$;
\item $\mathcal{Q}$, a class of positive functions defined in a neighborhood of infinity such that $q\in\mathcal{Q}$ iff $q(t)=o(t)$ and $q(t)\to\infty$ as $t\to\infty$.
\end{itemize}
Obviously, $\mathcal{W}^{r}_{RV}\subset \mathcal{W}^{r}\subset \mathcal{Q}$. We also denote by $\mathcal{N}$ the class of $\mathbb{N}$-valued functions.

An interesting question naturally occurring upon examination of Theorem \ref{thm:main} is the following. What happens with $K_{n,\,w(n)}$ for the functions $w\in\mathcal{W}^{r}\cap\mathcal{N}$, that is, for $w(n)$ growing slower than $r(n)$? The answer to this question under an additional assumption of regular variation is provided by our second main result.

\begin{theorem}\label{thm:main2}
Let $w\in\mathcal{W}^{r}_{RV}\cap\mathcal{N}$. Under the same assumptions on the subordinator $S$ as in Theorem \ref{thm:main} we have
$$
\frac{(w(n))^{\alpha+1}K_{n,\,w(n)}}{n^{\alpha}\ell(n/w(n))}\toprobab \alpha \mathcal{I}_{\alpha},
$$
with $\mathcal{I}_{\alpha}$ given by \eqref{eq:exp_func_def}.
\end{theorem}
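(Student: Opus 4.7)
The plan is to condition on the subordinator $S$ and reduce the statement to a conditional law of large numbers for a classical Karlin-type infinite occupancy scheme. Given $S$, each of the balls $E_1,\ldots,E_n$ falls independently into the $k$-th box with probability $p_k:=e^{-S(\tau_k-)}-e^{-S(\tau_k)}$, so $K_{n,\,w(n)}$ is the number of $w(n)$-occupied boxes in an occupancy scheme with (random) frequencies $(p_k)_k$. The essential input from the subordinator side is that, almost surely, the counting function of these frequencies is regularly varying at zero:
\begin{equation*}
\nu^*([x,1]):=\#\{k:p_k\geq x\}\sim x^{-\alpha}\ell(1/x)\mathcal{I}_\alpha,\qquad x\downarrow 0.
\end{equation*}
This is a standard consequence of the Poisson point process description of the jumps of $S$---the relevant computation being $\int_0^\infty\nu([x e^{S(\tau-)},\infty))\,\mathrm{d}\tau\sim x^{-\alpha}\ell(1/x)\mathcal{I}_\alpha$---and underlies Theorem~4.1 in \cite{Gnedin+Pitman+Yor:2006}.

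Next, I would compute the conditional expectation. Writing $\binom{n}{w(n)}p_k^{w(n)}(1-p_k)^{n-w(n)}\approx\frac{n^{w(n)}}{w(n)!}p_k^{w(n)}e^{-np_k}$ for the small frequencies that matter (one could equivalently Poissonize the number of balls), this reduces to the Laplace--Stieltjes integral
\begin{equation*}
\E[K_{n,\,w(n)}\mid S]\sim\frac{n^{w(n)}}{w(n)!}\int_0^1 x^{w(n)}e^{-nx}\,\nu^*(\mathrm{d}x),
\end{equation*}
whose integrand is sharply peaked at the saddle point $x_\ast=w(n)/n$. Invoking the Abelian theorem for Laplace--Stieltjes transforms of regularly varying functions with \emph{diverging} index---the analytical tool advertised in the abstract---one finds, almost surely,
\begin{equation*}
\E[K_{n,\,w(n)}\mid S]\sim\frac{\alpha\, n^\alpha\ell(n/w(n))}{w(n)^{\alpha+1}}\,\mathcal{I}_\alpha,
\end{equation*}
matching the deterministic normalization appearing in the theorem.

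Finally, expectation asymptotics are upgraded to convergence in probability via a conditional Chebyshev bound. In a Poissonized version of the scheme, conditionally on $S$, $K^{\mathrm{Poi}}_{n,\,w(n)}$ is a sum of independent Bernoullis, so its conditional variance is bounded by its mean; a standard de-Poissonization---painless here because $w(n)=o(r(n))=o(n^{\alpha/(\alpha+1)})\ll n$---yields the corresponding bound for $K_{n,\,w(n)}$. Since $w(n)=o(r(n))$, the conditional mean diverges a.s., so $K_{n,\,w(n)}/\E[K_{n,\,w(n)}\mid S]\to 1$ in conditional (hence unconditional) probability, which together with the preceding step gives the claim. The principal technical obstacle lies in the second step: classical Karamata/Abelian machinery is tailored to a \emph{fixed} exponent, whereas here the power $w(n)$ diverges and one must follow the slowly varying factor $\ell(\cdot)$ precisely at the moving saddle $x_\ast=w(n)/n$. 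Handling this is exactly what the paper's new Abelian theorems are designed for.
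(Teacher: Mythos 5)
Your outline matches the paper's proof up to and including the Poissonized conditional law of large numbers: condition on $S$, invoke the almost sure regular variation $\rho(x)\sim x^{-\alpha}\ell(1/x)\,\mathcal{I}_\alpha$ from \cite{Gnedin+Pitman+Yor:2006}, compute $\E[K_{w(t)}(t)\mid S]$ via the new Abelian theorem for Laplace--Stieltjes transforms with diverging exponent (Lemma \ref{lem:karamata_ls}), bound the conditional variance by the conditional mean, and apply Chebyshev plus dominated convergence. This is exactly Proposition \ref{prop:slow_q}, and you have identified the right ingredients for it.

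The genuine gap is your treatment of dePoissonization, which you dismiss as ``standard'' and ``painless because $w(n)\ll n$.'' The paper explicitly warns against this: the remark after Theorem \ref{thm:main_ks_poisson} points out that off-the-shelf dePoissonization results (such as Proposition 2.2 in \cite{Barbour+Gnedin:2009}) require regularity of the frequency sequence, e.g.\ monotone power-law decay $p^{\ast}_j\sim j^{-\beta^{\ast}}\ell_2^{\ast}(j)$, which is simply \emph{unavailable} for the random frequencies $p_k=e^{-S(\tau_k-)}-e^{-S(\tau_k)}$. Moreover $n\mapsto K_{n,\,w(n)}$ is not monotone, so the elementary sandwich that dePoissonizes $K_{n,\,\geq q(n)}$ in Theorem \ref{thm:main3} does not apply. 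The paper instead has to control the expected supremum of $|K_{\Pi(s),\,v}-K_{\Pi(t),\,v}|$ over a CLT-scale window $s\in[t-T\sqrt{t},\,t+T\sqrt{t}]$, split the contribution into the two scenarios in which a box can change its indicator, apply Lemma \ref{lem:karamata_ls} again, decompose the main term over three frequency ranges around $x_\ast=v/t$, and invoke Poisson tail bounds from \cite{Glynn:1987}. Crucially, this is where the two restrictions in the theorem statement are actually earned: the bound \eqref{eq:depoisson_proof4} needs $w(t)=o(\sqrt{t})$, which holds because $\alpha/(\alpha+1)<1/2$, i.e.\ because $\alpha<1$ (the Poissonized Proposition \ref{prop:slow_q} is valid for $\alpha\in(0,1]$, but the dePoissonized Theorem \ref{thm:main2} is not); and the passage $v(\lfloor s(t)\rfloor)+1\sim v(t)$ under $s(t)\sim t$ requires $w$ to be regularly varying, which is why the hypothesis is $w\in\mathcal{W}^{r}_{RV}$ rather than the weaker $w\in\mathcal{W}^{r}$ that suffices in the Poissonized model. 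Your proposal does not explain either restriction, which is a symptom of the missing argument rather than an incidental omission.
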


From the previous two results it is natural to expect that if $q\in\mathcal{Q}\cap\mathcal{N}$ is such that $r(t)=o(q(t))$, then
\begin{equation}\label{eq:to_zero}
K_{n,\,q(n)}\toprobab 0,
\end{equation}
and this is indeed the case, as we shall show later in the proofs. However, it is possible to formulate a non-trivial limit theorem for such ``rapidly increasing'' sequence by considering a closely related functional. For $i\in\N$, put
$$
K_{n,\,\geq i}:=\sum_{k\geq 1}\1_{\{\mathcal{Z}_{n,\,k}\geq i\}},
$$
so $K_{n,\,\geq i}$ is the number of boxes occupied by {\it at least} $i$ balls. Our last main result provides a limit theorem for $K_{n,\,\geq q(n)}$ for arbitrary $q\in\mathcal{Q}\cap\mathcal{N}$ not necessarily satisfying $r(t)=o(q(t))$ nor regularly varying. In a sense the next result is the easiest one, because the sequence $(K_{n,\,\geq i})_{n\in\N}$ is monotone for every fixed $i\in\N$. This also partially explains why almost no assumptions on $q$ are needed.

\begin{theorem}\label{thm:main3}
Under the same assumptions on the subordinator $S$ as in Theorem \ref{thm:main} and for $q\in\mathcal{Q}\cap\mathcal{N}$, the following holds:
$$
\frac{(q(n))^{\alpha}K_{n,\,\geq q(n)}}{n^{\alpha}\ell(n/q(n))}\toprobab \mathcal{I}_{\alpha}.
$$
\end{theorem}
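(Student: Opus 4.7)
The plan is to condition on the subordinator $S$ and reduce the statement to an asymptotic for a classical Karlin infinite occupancy scheme with random frequencies. Given $S$, the balls $E_1,E_2,\ldots$ fall independently into the boxes $(S(\tau_k-),S(\tau_k))$, each ball with probability $p_k:=e^{-S(\tau_k-)}(1-e^{-j_k})$ (with $\sum_k p_k=1$ almost surely). Conditionally on $S$, $K_{n,\geq q(n)}$ is therefore the number of occupied cells containing at least $q(n)$ balls in the Karlin scheme with frequencies $(p_k)_{k\in\N}$.

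The first key step is to establish the almost sure regular variation of the frequency counting function
$$
\mathcal{N}_{p}(y):=\#\{k:p_k\geq y\}~\sim~y^{-\alpha}\ell(1/y)\,\mathcal{I}_{\alpha}, \qquad y\downarrow 0,
$$
with $\mathcal{I}_{\alpha}$ from \eqref{eq:exp_func_def}. Rewriting $\{p_k\geq y\}$ as $\{j_k\geq -\log(1-ye^{S(\tau_k-)})\}$ and applying Campbell's formula for the Poisson point process $\sum\delta_{(\tau_k,j_k)}$ with intensity ${\bf LEB}\times\nu$, the predictable compensator of $\mathcal{N}_{p}(y)$ equals
$$
\int_0^{\infty}\nu\bigl([-\log(1-ye^{S(t-)}),\infty)\bigr)\1_{\{ye^{S(t-)}<1\}}\,{\rm d}t,
$$
which, upon applying \eqref{eq:reg_var_nu} and a dominated convergence argument based on Potter bounds, is asymptotically $y^{-\alpha}\ell(1/y)\int_0^{\infty}e^{-\alpha S(t)}{\rm d}t=y^{-\alpha}\ell(1/y)\mathcal{I}_{\alpha}$. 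The deviation of $\mathcal{N}_{p}(y)$ from its compensator is controlled by standard fluctuation bounds for PPP integrals --- this is essentially the mechanism underlying \eqref{eq:k_n_r_reg_var} proved in \cite{Gnedin+Pitman+Yor:2006}.

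I then combine this with the integral representation obtained from Fubini and the incomplete Beta expression for the binomial upper tail,
$$
\E[K_{n,\geq q}\mid S]=\int_0^1\frac{y^{q-1}(1-y)^{n-q}}{B(q,n-q+1)}\,\mathcal{N}_{p}(y)\,{\rm d}y.
$$
Since $q(n)\to\infty$ and $q(n)=o(n)$, the Beta$(q,n-q+1)$ density concentrates around $y\approx q/n$; substituting the asymptotic for $\mathcal{N}_{p}$ and using the Beta-moment identity $B(q-\alpha,n-q+1)/B(q,n-q+1)\sim(n/q)^{\alpha}$ yields, almost surely,
$$
\E[K_{n,\geq q(n)}\mid S]~\sim~(q(n))^{-\alpha}n^{\alpha}\ell(n/q(n))\,\mathcal{I}_{\alpha}.
$$
To upgrade conditional-mean convergence to convergence in probability I would Poissonize (replace the $n$ balls by an independent $\mathrm{Poisson}(n)$ stream), making the box contents conditionally independent and giving $\mathrm{Var}(K_{n,\geq q}\mid S)\leq\E[K_{n,\geq q}\mid S]$; since the conditional mean diverges, conditional Chebyshev yields relative concentration, and de-Poissonization exploits the monotonicity of $K_{n,\geq q}$ in $n$ emphasized after the theorem statement.

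The chief obstacle is the asymptotic for the conditional mean itself: it must be justified uniformly over arbitrary $q\in\mathcal{Q}\cap\mathcal{N}$, with no regular variation of $q$ and no restriction on the ratio $q(n)/r(n)$, while the slowly varying factor $\ell(1/y)$ inside $\mathcal{N}_{p}(y)$ is being integrated against a Beta density whose mean $q(n)/n$ is essentially arbitrary. Handling this cleanly requires uniform Potter bounds on a shrinking neighborhood of $y=q(n)/n$ together with an exponential tail bound on the Beta density away from its mean, to strip off the slowly varying contribution. The noted monotonicity of $K_{n,\geq q}$ (increasing in $n$, decreasing in $q$) is what ultimately permits the conclusion for every $q\in\mathcal{Q}\cap\mathcal{N}$ via a sandwich argument, without any smoothness hypothesis on $q$ itself.
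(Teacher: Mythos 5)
Your plan has the same skeleton as the paper's proof: establish that $\E[K_{n,\geq q(n)}\mid S]\sim (n/q(n))^{\alpha}\ell(n/q(n))\,\mathcal{I}_{\alpha}$ almost surely, obtain concentration via a conditional variance bound $\mathrm{Var}\leq\E$ plus Chebyshev, then dePoissonize using the monotonicity of $n\mapsto K_{n,\geq q}$. You also correctly identify the sandwich argument for dePoissonization, which is exactly what the paper does in Section \ref{sec:proof3}. The divergence is at the technical core. The paper Poissonizes immediately: given $S$ the counts $\mathcal{Z}_{\Pi(t),k}$ are conditionally independent Poissons, and after integration by parts the Poissonized conditional mean becomes the Gamma-kernel integral $\int_0^{\infty}\rho(y/t)\,e^{-y}y^{q(t)-1}/(q(t)-1)!\,{\rm d}y$, which Corollary \ref{cor:karamata} evaluates as $\sim\rho(q(t)/t)$ for any $q\in\mathcal{Q}$. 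You instead work directly in the binomial model and arrive at the Beta-kernel representation $\int_0^1 y^{q-1}(1-y)^{n-q}\mathcal{N}_p(y)\,{\rm d}y/B(q,n-q+1)$. That is a legitimate alternative representation, but the Abelian theorem you then need --- a Beta analogue of Lemma \ref{lem:karamata}/Corollary \ref{cor:karamata}, uniform over arbitrary $q\in\mathcal{Q}$ with no regularity on $q$ --- is the entire analytic content of the problem, and you only sketch it (``uniform Potter bounds on a shrinking neighborhood \ldots\ exponential tail bound on the Beta density''). The paper devotes Section \ref{sec:analytic} to proving precisely this kind of statement for the Gamma kernel; without a comparable lemma for the Beta kernel your argument has a gap at the point where all the work lies. (It is provable by the paper's methods, e.g.\ by approximating the Beta$(q,n-q+1)$ density by a rescaled Gamma$(q)$ density when $q=o(n)$ and reducing to Lemma \ref{lem:karamata}, but that reduction is not automatic and must be carried out.)

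Two smaller points. First, your Poissonization is organized awkwardly: you compute the \emph{binomial} conditional mean, then Poissonize to obtain $\mathrm{Var}\leq\E$, then dePoissonize. It would be cleaner either to Poissonize from the start (as the paper does, where the variance bound is immediate), or to remain in the multinomial model and use negative association of the occupancy counts $(\mathcal{Z}_{n,k})_k$ conditionally on $S$ to get $\mathrm{Var}(K_{n,\geq q}\mid S)\leq\E[K_{n,\geq q}\mid S]$ directly. Second, re-deriving the almost-sure regular variation of $\rho$ via Campbell's formula and compensator fluctuations is unnecessary effort: the paper simply cites \eqref{eq:gne_pit_yor_conv}, that is, Theorem 5.1 of \cite{Gnedin+Pitman+Yor:2006}, and you are free to do the same.
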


We close the introduction by specializing our main results to stable subordinators, which are typical representatives of the family of subordinators with regularly varying infinite L\'{e}vy measures.

\begin{example}
Let $S$ be an $\alpha$-stable subordinator, that is,
$$
\nu([y,\,\infty))=\frac{y^{-\alpha}}{\Gamma(1-\alpha)},\quad y>0,
$$
for some $\alpha\in(0,1)$. Thus, \eqref{eq:reg_var_nu} holds with $\ell(y)\equiv 1/\Gamma(1-\alpha)$, and \eqref{eq:r_def} holds with
$$
r(t)=\left(\frac{\alpha}{\Gamma(1-\alpha)}\right)^{1/(\alpha+1)}t^{\alpha/(\alpha+1)},\quad t>0.
$$
In particular, taking $m=1$ and $u_1=(\Gamma(1-\alpha)/\alpha)^{1/(\alpha+1)}$ in Theorem \ref{thm:main}, we obtain
$$
K_{n,\,\lfloor n^{\alpha/(\alpha+1)}\rfloor}\todistr \mathcal{P},
$$
where $\mathcal{P}$ has a mixed Poisson distribution with (conditional) mean $\E [\mathcal{P}|S]=\frac{\alpha}{\Gamma(1-\alpha)}\mathcal{I}_{\alpha}$. If $w=w(t)$ is a positive integer-valued function which is regularly varying at infinity and such that $w(t)\to\infty$ and $w(t)=o(t^{\alpha/(\alpha+1)})$, as $t\to\infty$, then
$$
\frac{(w(n))^{\alpha+1}K_{n,\,w(n)}}{n^{\alpha}}\toprobab \frac{\alpha}{\Gamma(1-\alpha)} \mathcal{I}_{\alpha}.
$$
Finally, for arbitrary $q\in\mathcal{Q}\cap\mathcal{N}$, by Theorem \ref{thm:main3}
$$
\frac{(q(n))^{\alpha}K_{n,\,\geq q(n)}}{n^{\alpha}}\toprobab \frac{1}{\Gamma(1-\alpha)} \mathcal{I}_{\alpha}.
$$
\end{example}

\textcolor{black}{The remainder of the paper is organized as follows. In Section \ref{sec:karlin} we discuss our findings from the viewpoint of certain infinite urn models, also known as Karlin occupancy schemes.  Section \ref{sec:karlin} culminates with Theorem \ref{thm:main_ks_poisson}, which is a general Poisson limit theorem for Karlin occupancy schemes. In Section \ref{sec:analytic} we formulate and prove our main technical lemmas on regularly varying functions. The proofs of the main results are given in Sections \ref{sec:proof1}, \ref{sec:proof2} and \ref{sec:proof3}.
}

\section{Poisson limits in the Karlin occupancy scheme}\label{sec:karlin}

\textcolor{black}{The construction of regenerative compositions, as described in the introduction, can be thought of as a particular instance of the following infinite occupancy scheme. Let $(p^{\ast}_k)_{k\in\N}$ be a fixed probability distribution on $\N$ such that $p^{\ast}_k>0$ for all $k\in\N$ and $\sum_{k\geq 1}p^{\ast}_k=1$. Identical balls are allocated one by one in an independent fashion among countable array of boxes such that the probability of hitting box $k$ for a particular ball is equal to $p^{\ast}_k$. This model is known in the literature as {\it the Karlin infinite occupancy scheme} due to seminal paper \cite{Karlin:1967} where its first systematic study was carried out. An excellent survey of the results up to 2007 can be found in \cite{Gnedin+Hansen+Pitman:2007}.}

\textcolor{black}{In essence, the model of regenerative compositions is the Karlin infinite occupancy scheme in random environment, that is, with random frequencies $(p_k)_{k\in\N}$ given by
\begin{equation}\label{eq:rc_frequencies}
p_k=\P\{E\in (S(\tau_k-),\,S(\tau_k))|S\}=e^{-S(\tau_k-)}-e^{-S(\tau_k)},\quad k\in\N,
\end{equation}
where $\{\tau_k\}$ is the set of jump epochs of the subordinator $S$. Throughout the paper we adopt the following convention: various quantities related to a generic Karlin infinite occupancy scheme are starred, whereas the quantities corresponding to the particular scheme with (random) frequencies \eqref{eq:rc_frequencies} are not. Keeping in mind this convention, given that $n$ balls are allocated, we denote by $\mathcal{Z}^{\ast}_{n,k}$ the number of balls in the box $k$, by $K_{n,i}^{\ast}$ the number of boxes occupied by exactly $i$ balls and by $K_{n,\geq i}^{\ast}$ the number of boxes occupied by at least $i$ balls in the Karlin scheme with generic frequencies $(p^{\ast}_k)_{k\in\N}$.}

\textcolor{black}{A key role in the asymptotic analysis of Karlin schemes is played by a counting function
\begin{equation}\label{eq:tho_ast_def}
\rho^{\ast}(x):=\sum_{k\geq 1}\1_{\{p_k^{\ast}\geq x\}},\quad x>0.
\end{equation}
The structure of the occupancy counts vector $(\mathcal{Z}^{\ast}_{n,k})_{k\in\N}$, for large $n$, is regulated to a large extent  by the behavior of this function at zero. A typical assumption is that of 
regular variation, that is, for some $\alpha^{\ast}\in [0,\,1]$, and a function $\ell^{\ast}$ slowly varying at infinity,
\begin{equation}\label{eq:reg_var_ks}
\rho^{\ast}(x)~\sim~x^{-\alpha^{\ast}}\ell^{\ast}(1/x),\quad x\downarrow 0.
\end{equation}
For example, it is known that under this assumption with $\alpha^{\ast}\in (0,\,1]$ the variables $K^{\ast}_{n,r}$, $r\in\N$, and $K_n^{\ast}:=\sum_{r\geq 1}K^{\ast}_{n,r}$ are all of the same magnitude in a sense that there exists a regularly varying normalization $a^{\ast}_n$ such that
$$
\frac{K^{\ast}_{n}}{a^{\ast}_n}\toas 1\quad\text{and}\quad \frac{K^{\ast}_{n,r}}{a^{\ast}_n}\toas c_r,\quad r\in\N,
$$
with the same constants $(c_r)_{r\in\N}$ as in \eqref{eq:k_n_r_reg_var}, see Eq. (20), (21) and Theorem 9 in \cite{Karlin:1967}. In fact, the proof of \eqref{eq:k_n_r_reg_var} in \cite{Gnedin+Pitman+Yor:2006} boils down to showing that \eqref{eq:reg_var_nu}, for $\alpha\in (0,\,1]$, implies
\begin{equation}\label{eq:gne_pit_yor_conv}
\frac{\rho(x)}{x^{-\alpha}\ell(1/x)}\overset{a.s.}{\longrightarrow} \mathcal{I}_{\alpha},\quad x\downarrow 0,
\end{equation}
where
\begin{equation}\label{eq:rho_def}
\rho(x):=\sum_{k\geq 1}\1_{\{p_k\geq x\}}=\sum_{k\geq 1}\1_{\{e^{-S(\tau_k-)}-e^{-S(\tau_k)}\geq x\}},\quad x>0,
\end{equation}
is a random counting function completely determined by the subordinator $S$. See Theorem 5.1 in \cite{Gnedin+Pitman+Yor:2006} for the proof of \eqref{eq:gne_pit_yor_conv} under assumption \eqref{eq:reg_var_nu}. Thus, a question posed in the introduction can also be asked for a general Karlin scheme with frequencies satisfying \eqref{eq:reg_var_ks}: what is a threshold $r^{\ast}=r^{\ast}(n)$ such that $K^{\ast}_{n,r^{\ast}(n)}$ converges to a non-degenerate limit? We shall partially answer this question by proving a Poisson limit theorem for a modified Karlin scheme known as the {\it Poissonized} scheme.}

\textcolor{black}{A powerful tool in the analysis of the Karlin occupancy scheme is the Poissonization--dePoisson\-iza\-tion technique. Let $\Pi=(\Pi(t))_{t\geq 0}$ be a standard Poisson process. The Poissonized model is obtained by allocating the balls at the epochs of the Poisson process $\Pi$: the first ball is dropped at the moment of the first jump of $\Pi$ (which is standard  exponential), the second ball is dropped at the moment of the second jump of $\Pi$ (which is the sum of two independent standard exponentials) and so on. The Poissonized model has an advantage that the number of balls in different boxes form mutually independent homogeneous Poisson processes with intensity of the number of balls in a box $k$ being equal to $p^{\ast}_k$, $k\in\N$. Note that the total number of balls allocated during $[0,\,t]$ is equal to $\Pi(t)$. For $r\in\N$ and $t\geq 0$, denote by $K^{\ast}_r(t):=K^{\ast}_{\Pi(t),\,r}$ the number of boxes containing $r$ balls at time $t$ in the Poissonized model. Note that, for $r\in\N$,
\begin{equation}\label{eq:eq_k_r_t_rep_ks}
K^{\ast}_r(t)=\sum_{k\geq 1}\1_{\{\mathcal{Z}^{\ast}_{\Pi(t),\,k}=r\}},\quad t\geq 0,
\end{equation}
and also
\begin{equation}\label{eq:eq_k_r_t_geq_rep_ks}
K^{\ast}_{\geq r}(t)=\sum_{k\geq 1}\1_{\{\mathcal{Z}^{\ast}_{\Pi(t),\,k}\geq r\}},\quad t\geq 0.
\end{equation}
If the vector $(p^{\ast}_k)_{k\in\N}$ is random (for example, as in \eqref{eq:rc_frequencies}), then the Poisson process $\Pi$ should be taken independent of $(p^{\ast}_k)_{k\in\N}$. In this case $(\mathcal{Z}^{\ast}_{\Pi(t),\,k})_{k\geq 1}$ are {\it conditionally independent} given $(p^{\ast}_k)_{k\in\N}$.}

\textcolor{black}{Our strategy in derivation of Theorem \ref{thm:main} is as follows. We shall first formulate and prove a counterpart of Theorem \ref{thm:main} for a general Karlin occupancy scheme with frequencies satisfying \eqref{eq:reg_var_ks} and $\alpha^{\ast}\in(0,\,1]$ in the Poissonized model\footnote{Note that we do not exclude the case $\alpha^{\ast}=1$.}. By specializing this result to $(p_k)_{k\in\N}$, given by \eqref{eq:rc_frequencies}, we shall then derive the Poissonized versions of Theorem \ref{thm:main} as a corollary. On the third and final step we shall deduce Theorem \ref{thm:main} by applying the dePoisson\-iza\-tion technique. Here is the counterpart of Theorem \ref{thm:main} for the general Karlin scheme satisfying \eqref{eq:reg_var_ks} in the Poissonized settings. The proof will be given in Section \ref{sec:proof1}.}

\textcolor{black}{\begin{theorem}\label{thm:main_ks_poisson}
Assume that \eqref{eq:reg_var_ks} holds for some $\alpha^{\ast}\in (0,\,1]$ and a function $\ell^{\ast}$ slowly varying at infinity. Let $r^{\ast}=r^{\ast}(t)$ by any positive function such that
\begin{equation}\label{eq:tilde_r_def}
\lim_{t\to\infty}\frac{\alpha^{\ast} t^{\alpha^{\ast}}}{(r^{\ast}(t))^{\alpha^{\ast}+1}}\ell^{\ast}\left(\frac{t}{r^{\ast}(t)}\right)=1,
\end{equation}
and $r^{\ast}_i=r^{\ast}_i(t)$, $i=1,\ldots,m$ be integer-valued functions such that $r^{\ast}_i(t)~\sim~u^{\ast}_i r^{\ast}(t)$ for some $0<u^{\ast}_1<u^{\ast}_2<\cdots<u^{\ast}_m<\infty$, as $t\to\infty$. Then
\begin{equation}\label{eq:conv_poisson_ks}
\left(K^{\ast}_{r^{\ast}_1(t)}(t),K^{\ast}_{r^{\ast}_2(t)}(t),\ldots,K^{\ast}_{r^{\ast}_m(t)}(t)\right) \todistrt (\mathcal{P}^{\ast}_1,\mathcal{P}^{\ast}_2,\ldots,\mathcal{P}^{\ast}_m),
\end{equation}
where $(\mathcal{P}^{\ast}_i)_{i=1,\ldots,m}$ are mutually independent Poisson random variables with $\E \left[\mathcal{P}^{\ast}_i \right] = (u^{\ast}_i)^{-\alpha^{\ast}-1}$.
\end{theorem}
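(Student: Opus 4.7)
The plan is to exploit the independence afforded by Poissonization. By the splitting property of Poisson processes, the occupancy counts $N_k(t) := \mathcal{Z}^{\ast}_{\Pi(t),\,k}$, $k\in\N$, are \emph{independent} Poisson random variables with means $p^{\ast}_k t$. Consequently,
$$
K^{\ast}_r(t) = \sum_{k \geq 1} \1_{\{N_k(t) = r\}}
$$
is a sum of independent Bernoulli indicators with parameters $q_k(t, r) := e^{-p^{\ast}_k t}(p^{\ast}_k t)^r / r!$, which reduces the problem to a multivariate Poisson limit theorem for a triangular array of independent Bernoullis.

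The core of the proof is the asymptotic evaluation of $\E K^{\ast}_r(t) = \sum_k q_k(t,r)$ at $r = r^{\ast}_i(t)$. Rewriting the sum as a Stieltjes integral against the counting function \eqref{eq:tho_ast_def} and substituting $y = xt$,
$$
\E K^{\ast}_r(t) = \int_0^{\infty} \frac{e^{-xt}(xt)^r}{r!}\, d(-\rho^{\ast}(x)) = \int_0^{\infty} \frac{e^{-y} y^r}{r!}\, d(-\rho^{\ast}(y/t)).
$$
The Poisson density $y \mapsto e^{-y} y^r/r!$ concentrates around $y \approx r$ with width of order $\sqrt{r}$, and on this window $\ell^{\ast}(t/y)$ can be replaced by $\ell^{\ast}(t/r)$ by slow variation, formally yielding
$$
\E K^{\ast}_r(t) \sim \alpha^{\ast} t^{\alpha^{\ast}} \ell^{\ast}(t/r) \int_0^{\infty} \frac{e^{-y} y^{r - \alpha^{\ast} - 1}}{r!}\, dy = \alpha^{\ast} t^{\alpha^{\ast}} \ell^{\ast}(t/r)\, \frac{\Gamma(r - \alpha^{\ast})}{\Gamma(r+1)}.
$$
Combining with $\Gamma(r-\alpha^{\ast})/\Gamma(r+1) \sim r^{-\alpha^{\ast}-1}$ (Stirling), the defining relation \eqref{eq:tilde_r_def}, and the hypothesis $r^{\ast}_i(t)/r^{\ast}(t) \to u^{\ast}_i$ together with slow variation of $\ell^\ast$, one obtains $\E K^{\ast}_{r^{\ast}_i(t)}(t) \to (u^{\ast}_i)^{-\alpha^{\ast}-1}$. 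Making this asymptotic rigorous is the \emph{principal technical obstacle}: it requires uniform control of the slowly varying factor across the concentration window of the Poisson density together with a careful truncation argument bounding the contribution of very small and very large $y$. This is precisely the class of results obtained in Section \ref{sec:analytic} via Abelian theorems for Laplace--Stieltjes transforms of regularly varying functions with indices diverging to infinity.

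Next, the function $p \mapsto e^{-pt}(pt)^r/r!$ is maximised at $p = r/t$ with maximum $e^{-r} r^r/r! \sim (2\pi r)^{-1/2}$ by Stirling's formula, so $\max_k q_k(t, r^{\ast}_i(t)) \to 0$. Combined with the previous step, the classical Poisson limit theorem for row-independent Bernoulli arrays gives the marginal convergence $K^{\ast}_{r^{\ast}_i(t)}(t) \todistrt \mathcal{P}^{\ast}_i$, where $\mathcal{P}^{\ast}_i$ is Poisson with mean $(u^{\ast}_i)^{-\alpha^{\ast}-1}$.

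To upgrade to the joint statement \eqref{eq:conv_poisson_ks} with \emph{independent} limits, I would use the structural observation that for each fixed $k$ the events $\{N_k(t) = r^{\ast}_1(t)\}, \ldots, \{N_k(t) = r^{\ast}_m(t)\}$ are pairwise disjoint (the $r^{\ast}_i(t)$ being distinct integers for $t$ large enough). The vectors
$$
\mathbf{X}_k(t) := \bigl(\1_{\{N_k(t) = r^{\ast}_1(t)\}}, \ldots, \1_{\{N_k(t) = r^{\ast}_m(t)\}}\bigr)
$$
therefore take values in $\{0, \mathbf{e}_1, \ldots, \mathbf{e}_m\}$ and are independent over $k$. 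A direct computation of the joint Laplace functional of $\sum_k \mathbf{X}_k(t)$ factorises in the limit over the coordinates, delivering the vector of mutually independent Poisson variables announced in \eqref{eq:conv_poisson_ks}.
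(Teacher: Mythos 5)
Your proposal is correct and follows essentially the same route as the paper: reduce to independence via Poissonization, compute $\E K^{\ast}_{r^{\ast}_i(t)}(t) \to (u^{\ast}_i)^{-\alpha^{\ast}-1}$ by applying the Abelian theorem for Laplace--Stieltjes transforms (Lemma \ref{lem:karamata_ls}) to the Stieltjes integral against $-\rho^{\ast}$, and deduce the joint Poisson limit from the disjointness (for large $t$) of the events $\{\mathcal{Z}^{\ast}_{\Pi(t),\,k}=r^{\ast}_i(t)\}$. The paper makes the last step explicit via the factorised multivariate generating function and Lemma 4.8 in \cite{Kallenberg}, which is exactly the "Laplace functional factorisation" you describe more loosely.
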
}

\textcolor{black}{\begin{remark}
Under additional regularity assumptions on the sequence $(p^{\ast}_k)_{k\in\N}$ it is possible to perform dePossonization in Theorem \ref{thm:main_ks_poisson}. For example, if $p^{\ast}_1\geq p^{\ast}_2\geq p^{\ast}_3\geq\cdots$ and
$$
p^{\ast}_j\sim j^{-\beta^{\ast}}\ell_2^{\ast}(j),\quad j\to\infty,
$$
for some $\beta^{\ast}\geq 1$ and a slowly varying function $\ell_2^{\ast}$, then \eqref{eq:reg_var_ks} holds with
$\alpha^{\ast}=1/\beta^{\ast}$ and some slowly varying at infinity function $\ell^{\ast}$, which can be expressed using de Bruijn conjugates as has been explained in Remark \ref{rem:rer_var_r} above. Thus, all the assumptions of 
Theorem \ref{thm:main_ks_poisson} hold. Furthermore, by Proposition 2.2. in \cite{Barbour+Gnedin:2009} applied with $k(n)=\log n$, we also have 
\begin{equation}\label{eq:conv_ks}
\left(K^{\ast}_{n,\,r_1(n)},K^{\ast}_{n,\,r_2(n)},\ldots,K^{\ast}_{n,\,r_m(n)}\right) \todistr (\mathcal{P}^{\ast}_1,\mathcal{P}^{\ast}_2,\ldots,\mathcal{P}^{\ast}_m).
\end{equation}
Unfortunately, such regularity conditions are unavailable  for regenerative compositions, forcing us to perform dePoissonization by hands.
\end{remark}}

\textcolor{black}{The strategy mentioned just before Theorem \ref{thm:main_ks_poisson} can also be applied to derive Theorems \ref{thm:main2} and \ref{thm:main3}. However, their counterparts for the general Karlin occupancy scheme satisfying \eqref{eq:reg_var_ks} are less interesting because the limits are then degenerate (non-random). Instead, we shall derive below the Poissonized versions of Theorems \ref{thm:main2} and \ref{thm:main3} directly, see Propositions \ref{prop:slow_q} and \ref{prop:k_n_r_geq}, and then perform dePossonization.}

\section{Preparatory analytic results}\label{sec:analytic}

The following two analytic lemmas lie in the core of our proofs. Both results provide a kind of Abelian theorems for integrals involving regularly varying functions with the index of regular variation diverging to infinity, and might be of interest on their own.  The essence of the proofs is an application of the saddle-point method.

\begin{lemma}\label{lem:karamata}
Let $\ell$ be a locally bounded positive function slowly varying at infinity and $q\in\mathcal{Q}$. Then
\begin{equation}\label{eq:karamata}
\int_0^{\infty}y^{q(t)-1}e^{-y}\ell(t/y){\rm d}y~\sim~\Gamma(q(t))\ell(t/q(t)),\quad t\to\infty.
\end{equation}
\end{lemma}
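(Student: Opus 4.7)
The plan is to apply a saddle-point (Laplace) argument, taking advantage of the fact that the kernel $y\mapsto y^{q(t)-1}e^{-y}$ concentrates in a window of width $\sqrt{q(t)}$ around its mode $y\approx q(t)$, while the slowly varying factor $\ell(t/y)$ barely varies over this window because $q(t)=o(t)$. Abbreviating $q=q(t)$ and substituting $y=qs$, I would rewrite
$$
\int_0^{\infty}y^{q-1}e^{-y}\ell(t/y)\,{\rm d}y
\;=\; q^{q}\int_0^{\infty}s^{q-1}e^{-qs}\,\ell\!\bigl(t/(qs)\bigr)\,{\rm d}s,
$$
and recall that the normalised weight $q^{q}s^{q-1}e^{-qs}/\Gamma(q)$ is a unimodal probability density with mode at $s=(q-1)/q\to 1$ whose mass concentrates on any fixed neighbourhood of $1$ as $q\to\infty$. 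This makes $\Gamma(q)\ell(t/q)$ the expected main term.

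Next I would fix a small $\delta\in(0,1/2)$ and split the integral at $s=1\pm\delta$. On the central piece $[1-\delta,1+\delta]$, since $t/(qs)\geq t/((1+\delta)q)\to\infty$ uniformly in $s$, the uniform convergence theorem for slowly varying functions yields
$$
\sup_{s\in[1-\delta,1+\delta]}\left|\frac{\ell(t/(qs))}{\ell(t/q)}-1\right|\;\longrightarrow\;0,\qquad t\to\infty.
$$
Combined with the standard Laplace-type identity $q^{q}\int_{1-\delta}^{1+\delta}s^{q-1}e^{-qs}\,{\rm d}s=\Gamma(q)(1+o(1))$, which follows from Stirling together with the fact that the remaining mass is exponentially small in $q$, the central piece contributes the asymptotic $\Gamma(q)\ell(t/q)$ claimed in \eqref{eq:karamata}.

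The remaining task is to show that the two tails $[0,1-\delta]$ and $[1+\delta,\infty)$ are of smaller order. Writing $s^{q-1}e^{-qs}=s^{-1}e^{-q}\exp(-qh(s))$ with $h(s):=s-1-\log s\geq 0$ and $h(s)\geq h(1\pm\delta)>0$ off the central piece, the integrand decays super-exponentially in $q$ there. To absorb the factor $\ell(t/(qs))$ I would use Potter's bound: for any $\varepsilon>0$ there exist constants $C>0$ and $u_0\geq 1$ with $\ell(u)/\ell(v)\leq C\max((u/v)^{\varepsilon},(u/v)^{-\varepsilon})$ for $u,v\geq u_0$, and on the complementary range $\ell$ is controlled by the local boundedness hypothesis. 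This gives $\ell(t/(qs))\leq C\ell(t/q)\max(s^{\varepsilon},s^{-\varepsilon})$ whenever $t/(qs)\geq u_0$, and a direct comparison with $\Gamma(q)\sim\sqrt{2\pi/q}\,q^{q}e^{-q}$ reduces the tail contributions to integrals of type $q^{q}\int s^{q-1\mp\varepsilon}e^{-qs}\,{\rm d}s$ over $[0,1-\delta]\cup[1+\delta,\infty)$, which are exponentially smaller than the main term. The main obstacle I anticipate is handling the sub-region $s\downarrow 0$, where $\ell(t/(qs))$ may be genuinely large: here one must further split, invoking local boundedness on the shrinking interval $s\in(0,qu_0/t]$ and Potter's estimate on its complement, with the rapid vanishing of $s^{q-1}$ as $s\downarrow 0$ providing the decisive cancellation.
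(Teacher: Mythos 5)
Your proposal uses essentially the same argument as the paper: after recognizing that the Gamma-type kernel concentrates near $y \approx q(t)$, you split the integral into a central piece, where the uniform convergence theorem for slowly varying functions delivers $\Gamma(q(t))\ell(t/q(t))$, and two tails where the kernel is exponentially small in $q(t)$. The paper's split points $aq(t),Aq(t)$ (fixed $0<a<1<A$) play the role of your $(1\pm\delta)q(t)$, and it controls $\ell$ on the tails via monotonicity together with Proposition 1.5.10 and Theorem 1.5.3 of BGT rather than Potter's bound, but the mechanism is identical.

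One slip to correct in your last paragraph: you locate the potential trouble spot at $s \downarrow 0$ and propose to use local boundedness on the shrinking interval $(0,\,qu_0/t]$, but this inverts the geometry. As $s \downarrow 0$ the argument $t/(qs)\to\infty$, so $\ell$ is read at large values and Potter's bound \emph{does} apply, with the $s^{-\varepsilon}$ penalty harmlessly absorbed by $s^{q-1}$. The range that escapes Potter is $s > t/(q u_0)$ (equivalently $t/(qs) < u_0$), i.e.\ the \emph{far right} tail, where it is the $e^{-qs}$ factor, not the vanishing of $s^{q-1}$, that supplies the control; this is exactly the role Theorem 1.5.3 of BGT plays in the paper's estimate of $I_3(t)$. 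Once you flip these regions around, the argument closes.
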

Note that for $q(t)\equiv q>0$ this is just the direct half of the classical Karamata theorem. Also note that in general $\ell(t/q(t))$ in the right-hand side cannot be replaced by $\ell(t)$.
\begin{proof}
Let $a,A$ be positive constants such that $0<a<1<A<\infty$. Write
$$
\int_0^{\infty}y^{q(t)-1}e^{-y}\ell(t/y){\rm d}y=\left(\int_0^{aq(t)}+\int_{aq(t)}^{Aq(t)}+\int_{Aq(t)}^{\infty}\right)y^{q(t)-1}e^{-y}\ell(t/y){\rm d}y\\
=:I_1(t)+I_2(t)+I_3(t).
$$
We start by showing that
\begin{equation}\label{eq:karamata_proof1}
\lim_{t\to\infty}\frac{I_1(t)}{\Gamma(q(t))\ell(t/q(t))}=0.
\end{equation}
For $\alpha>0$, the function $y\mapsto y^{\alpha}e^{-y}$ is increasing on $[0,\,\alpha]$. Since $aq(t)\leq q(t)-1$ for large enough $t>0$, we obtain
\begin{equation}\label{eq:karamata_proof1bis}
\int_0^{aq(t)}y^{q(t)-1}e^{-y}\ell(t/y){\rm d}y\leq (aq(t))^{q(t)-1}e^{-aq(t)}\int_0^{aq(t)}\ell(t/y){\rm d}y
\sim~(aq(t))^{q(t)}e^{-aq(t)}\ell(t/(aq(t))),\quad t\to\infty,
\end{equation}
where the last passage follows from Proposition 1.5.10 in \cite{BGT} upon substitution $z=t/y$. Thus, by the slow variation of $\ell$ and the Stirling formula for the gamma-function,
$$
\lim_{t\to\infty}\frac{I_1(t)}{\Gamma(q(t))\ell(t/q(t))}\leq \lim_{t\to\infty}\frac{(aq(t))^{q(t)}e^{-aq(t)}}{\Gamma(q(t))}=\lim_{t\to\infty}\frac{(aq(t))^{q(t)}e^{-aq(t)}}{\sqrt{2\pi}q(t)^{q(t)-1/2}e^{-q(t)}}=\lim_{t\to\infty}\sqrt{q(t)/(2\pi)}(ae^{-a}e)^{q(t)}.
$$
The last limit is equal to zero because $ae^{-a}e<1$ for $0<a<1$ and \eqref{eq:karamata_proof1} follows. Let us check that
\begin{equation}\label{eq:karamata_proof2}
\lim_{t\to\infty}\frac{I_3(t)}{\Gamma(q(t))\ell(t/q(t))}=0.
\end{equation}
Upon substitution $y=q(t)z$ we obtain
\begin{multline*}
I_3(t)=(q(t))^{q(t)}\int_{A}^{\infty}z^{q(t)-1}e^{-q(t)z}\ell(t/(zq(t))){\rm d}z\leq t^{-1}(q(t))^{q(t)+1}\sup_{y\leq t/(Aq(t))}(y\ell(y))\int_{A}^{\infty}z^{q(t)}e^{-q(t)z}{\rm d}z.
\end{multline*}
By Theorem 1.5.3 in \cite{BGT}, the right-hand side of the last display is asymptotically equal to
$$
A^{-1}(q(t))^{q(t)}\ell(t/q(t))\int_{A}^{\infty}z^{q(t)}e^{-q(t)z}{\rm d}z.
$$
Therefore, \eqref{eq:karamata_proof2} is a consequence of
\begin{equation}\label{eq:karamata_proof3}
\lim_{t\to\infty}\frac{(q(t))^{q(t)}\int_{A}^{\infty}z^{q(t)}e^{-q(t)z}{\rm d}z}{\Gamma(q(t))}=0.
\end{equation}
To check the latter we write
\begin{align}
\int_{A}^{\infty}z^{q(t)}e^{-q(t)z}{\rm d}z&=\int_0^{\infty}(z+A)^{q(t)}e^{-q(t)(A+z)}{\rm d}z\notag\\
&=(Ae^{-A})^{q(t)}\int_0^{\infty}(1+z/A)^{q(t)}e^{-zq(t)}{\rm d}z\notag\\
&\leq (Ae^{-A})^{q(t)}\int_0^{\infty}e^{-z(1-A^{-1})q(t)}{\rm d}z\notag\\
&=(Ae^{-A})^{q(t)}\frac{A}{(A-1)q(t)}.\label{eq:karamata_proof1bisbis}
\end{align}
Thus, the Stirling formula for the gamma-function yields \eqref{eq:karamata_proof3} because $Ae^{-A}e<1$ for $A>1$. By applying \eqref{eq:karamata_proof1} and \eqref{eq:karamata_proof2} with $\ell\equiv 1$ we obtain
$$
\int_{aq(t)}^{Aq(t)}y^{q(t)-1}e^{-y}{\rm d}y~\sim~\Gamma(q(t)),\quad t\to\infty.
$$
This shows that \eqref{eq:karamata} is a consequence of
$$
\lim_{t\to\infty}\frac{I_2(t)}{\ell(t/q(t))\int_{aq(t)}^{Aq(t)}y^{q(t)-1}e^{-y}{\rm d}y}=1.
$$
But this relation follows trivially from the uniform convergence theorem for slowly varying functions, see Theorem 1.5.2 in \cite{BGT}. Indeed,
$$
\frac{\inf_{z\in [a,\,A]}\ell(t/(zq(t)))}{\ell(t/q(t))}\leq\frac{I_2(t)}{\ell(t/q(t))\int_{aq(t)}^{Aq(t)}y^{q(t)-1}e^{-y}{\rm d}y}\leq\frac{\sup_{z\in [a,\,A]}\ell(t/(zq(t)))}
{\ell(t/q(t))},
$$
and both lower and upper bounds converge to one, as $t\to\infty$, by the aforementioned uniformity.
\end{proof}
\begin{corollary}\label{cor:karamata}
Let $U$ be a positive measurable and locally bounded function such that $U(x)~\sim~x^{\beta}\ell(x)$, as $x\to\infty$, for some $\ell$ slowly varying at infinity and $\beta\in\R$. If $q\in\mathcal{Q}$, then
\begin{equation}\label{eq:karamata_reg_var}
\int_0^{\infty}y^{q(t)-1}e^{-y}U(t/y){\rm d}y~\sim~\Gamma(q(t))U(t/q(t)),\quad t\to\infty.
\end{equation}
\end{corollary}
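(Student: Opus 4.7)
The plan is to deduce Corollary \ref{cor:karamata} directly from Lemma \ref{lem:karamata} by peeling off the power factor $x^{\beta}$ from $U$. I would define $V(x):=U(x)/x^{\beta}$ for $x>0$. Then $V$ inherits positivity, measurability and local boundedness on $(0,\infty)$ from $U$ (here one uses that $x^{\beta}$ is positive and continuous on $(0,\infty)$), and the asymptotic $V(x)\sim \ell(x)$ at infinity immediately implies that $V$ is slowly varying at infinity, since a positive function asymptotic to a slowly varying function is itself slowly varying. Hence $V$ satisfies the hypotheses of Lemma \ref{lem:karamata}.

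Next I would rewrite the integrand as $U(t/y)=(t/y)^{\beta}V(t/y)$, which yields
$$
\int_0^{\infty}y^{q(t)-1}e^{-y}U(t/y)\,{\rm d}y
\;=\;t^{\beta}\int_0^{\infty}y^{(q(t)-\beta)-1}e^{-y}V(t/y)\,{\rm d}y.
$$
Since $q(t)\to\infty$ and $q(t)=o(t)$, the shifted function $\widetilde{q}(t):=q(t)-\beta$ still lies in $\mathcal{Q}$ for all sufficiently large $t$. Applying Lemma \ref{lem:karamata} to $V$ and $\widetilde{q}$ gives
$$
\int_0^{\infty}y^{\widetilde{q}(t)-1}e^{-y}V(t/y)\,{\rm d}y \;\sim\; \Gamma(\widetilde{q}(t))\,V\bigl(t/\widetilde{q}(t)\bigr), \quad t\to\infty.
$$

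To match this with the claimed form $\Gamma(q(t))U(t/q(t))$ I would invoke two routine asymptotics. First, the classical relation $\Gamma(z+a)/\Gamma(z)\sim z^{a}$ as $z\to\infty$ (a consequence of Stirling's formula) gives $\Gamma(q(t)-\beta)\sim (q(t))^{-\beta}\Gamma(q(t))$. Second, $t/(q(t)-\beta)\sim t/q(t)\to\infty$ together with the uniform convergence theorem for slowly varying functions (Theorem 1.5.2 in \cite{BGT}) gives $V(t/(q(t)-\beta))\sim V(t/q(t))$. Combining these,
$$
t^{\beta}\Gamma(q(t)-\beta)V\bigl(t/(q(t)-\beta)\bigr)
\;\sim\; \Gamma(q(t))\bigl(t/q(t)\bigr)^{\beta}V(t/q(t))
\;=\; \Gamma(q(t))\,U(t/q(t)),
$$
which is \eqref{eq:karamata_reg_var}.

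The argument is essentially a routine reduction and I do not foresee a serious obstacle. The one mildly delicate point is the verification that $V$ satisfies the hypotheses of Lemma \ref{lem:karamata} on the \emph{entire} half-line $(0,\infty)$, not merely at infinity; this is precisely where the assumption of local boundedness of $U$ (as opposed to only its asymptotic behavior) is needed. Once this is in place, the remaining work is to organize the Stirling comparison and the slow-variation comparison cleanly.
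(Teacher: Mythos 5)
Your argument is correct and is essentially the paper's proof made explicit: the paper simply says ``without loss of generality $U(x)=x^{\beta}\ell(x)$,'' which amounts to the substitution $V=U/x^{\beta}$ you spell out, followed by Lemma \ref{lem:karamata} applied to the shifted exponent $q(t)-\beta$ and the Stirling asymptotic $\Gamma(q(t)-\beta)\sim (q(t))^{-\beta}\Gamma(q(t))$. You are marginally more careful than the paper in checking that $V$ satisfies the hypotheses of Lemma \ref{lem:karamata}, that $q-\beta\in\mathcal{Q}$, and that $V(t/(q(t)-\beta))\sim V(t/q(t))$, but the route is the same.
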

\begin{proof}
Without loss of generality we may assume that $U(x)=x^{\beta}\ell(x)$ for all $x>0$. The proof now follows follows from Lemma \ref{lem:karamata} by plugging $U(t/y)=(t/y)^{\beta}\ell(t/y)$ and using the asymptotic relation $\Gamma(q(t)-\beta)~\sim~(q(t))^{-\beta}\Gamma(q(t))$, since $q(t)\to\infty$, as $t\to\infty$.
\end{proof}

The next lemma is a counterpart of the Abelian implication of the Karamata theorem for Laplace--Stieltjes transforms.

\begin{lemma}\label{lem:karamata_ls}
Let $q\in\mathcal{Q}$ and $U:(0,\,\infty)\mapsto (0,\,\infty)$ be a right-continuous nonincreasing function such that $U(x)~\sim~x^{-\gamma}\ell(1/x)$, as $x\downarrow 0$, for some $\ell$ slowly varying at infinity and $\gamma>0$. Then
\begin{equation}\label{eq:karamata_ls}
\int_{[0,\,\infty)}e^{-tx}\frac{(tx)^{q(t)}}{\Gamma(q(t)+1)}{\rm d}(-U(x))~\sim~\gamma U(q(t)/t)/q(t)~=~\frac{\gamma t^{\gamma}\ell(t/q(t))}{q^{1+\gamma}(t)},\quad t\to\infty.
\end{equation}
\end{lemma}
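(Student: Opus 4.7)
The plan is to reduce $I(t) := \int_{[0,\infty)}e^{-tx}(tx)^{q(t)}/\Gamma(q(t)+1)\,d(-U(x))$ to a single Gamma-type expectation via integration by parts, and then to carry out a saddle-point (Laplace) analysis. Setting $h_t(x) := (tx)^{q(t)}e^{-tx}/\Gamma(q(t)+1)$, the Lebesgue--Stieltjes integration by parts is justified for all large $t$ because $q(t)>\gamma$ eventually makes the boundary term $h_t(x)U(x)$ vanish at both endpoints, and it yields $I(t) = \int_0^\infty U(x)h_t(x)(q(t)/x - t)\,dx$. Substituting $y = tx$ and using the identity $y^{q(t)-1}(q(t)-y)/\Gamma(q(t)+1) = y^{q(t)-1}/\Gamma(q(t)) - y^{q(t)}/\Gamma(q(t)+1)$, one obtains $I(t) = \E[U(Y_t/t)] - \E[U(Y_t'/t)]$ with $Y_t\sim\text{Gamma}(q(t),1)$ and $Y_t'\sim\text{Gamma}(q(t)+1,1)$. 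Since the density of $Y_t'$ equals $(y/q(t))$ times that of $Y_t$, this consolidates into
$$
I(t) = -\frac{1}{q(t)}\,\E\bigl[U(Y_t/t)\,(Y_t - q(t))\bigr].
$$

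Corollary \ref{cor:karamata}, applied to $V(z) := U(1/z)$ (regularly varying at infinity with index $\gamma$), gives $\E[U(Y_t/t)] \sim U(q(t)/t) \sim \E[U(Y_t'/t)]$, so the leading orders cancel and the corollary alone is insufficient. A Taylor heuristic around $\E[Y_t] = q(t)$, combined with $\mathrm{Var}(Y_t) = q(t)$ and the monotone-density-theorem relation $-U'(x)\sim\gamma x^{-\gamma-1}\ell(1/x)$, predicts $I(t) \sim \gamma U(q(t)/t)/q(t)$; the task is to make this rigorous without assuming differentiability of $U$.

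For the rigorous argument I would substitute $Y_t = q(t) + z\sqrt{q(t)}$ inside the expectation and apply Stirling's formula: the resulting density in $z$ converges, uniformly on compact intervals, to the standard Gaussian $e^{-z^2/2}/\sqrt{2\pi}$. The uniform convergence theorem for regularly varying functions yields $U((q(t)/t)(1+z/\sqrt{q(t)}))/U(q(t)/t) = (1+z/\sqrt{q(t)})^{-\gamma}(1+o(1)) = 1 - \gamma z/\sqrt{q(t)} + o(1/\sqrt{q(t)})$ on such intervals. The factor $(Y_t - q(t)) = z\sqrt{q(t)}$ then kills the constant-in-$z$ term through $\int z\,e^{-z^2/2}\,dz = 0$ (the source of the leading-order cancellation), while the $-\gamma z/\sqrt{q(t)}$ correction combines with $\int z^2 e^{-z^2/2}/\sqrt{2\pi}\,dz = 1$ to deliver $I(t)/U(q(t)/t) \sim \gamma/q(t)$. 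The principal obstacle is tail control: outside a Gaussian window, say $|z|\geq q(t)^{1/6}$, I would combine Potter bounds on $U$ (at most polynomial growth in $|z|$) with the super-polynomial decay of the Gamma density away from its mode to show the tail contribution is $o(U(q(t)/t)/q(t))$; a truncation together with the uniform convergence theorem for slowly varying functions handles the factor $\ell(t/y)/\ell(t/q(t))$ inside the Gaussian window.
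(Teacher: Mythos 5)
Your algebraic reduction to
\begin{equation*}
I(t) \;=\; \E\bigl[U(Y_t/t)\bigr]-\E\bigl[U(Y'_t/t)\bigr] \;=\; -\,q(t)^{-1}\,\E\bigl[U(Y_t/t)\,(Y_t-q(t))\bigr]
\end{equation*}
is correct, and the route through a Gaussian local limit for $Y_t$ around its mode $q(t)$ is genuinely different from the paper's, which changes variables via the two inverse branches $W_1,W_2$ of $y\mapsto ye^{-y}$ so that $U$ is evaluated only at points whose ratio to $q(t)/t$ lies in the \emph{fixed} interval $[y_1,y_2]$, and then handles the cancellation by the exact identity $\int_0^\infty y^{-\gamma}\,d_y\bigl((ye^{-y})^{q(t)}\bigr)=\gamma(q(t))^{\gamma-q(t)}\Gamma(q(t)-\gamma)$ rather than by a Taylor expansion.

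The proposal, however, has a genuine gap at the step
\begin{equation*}
\frac{U\bigl((q(t)/t)(1+z/\sqrt{q(t)})\bigr)}{U(q(t)/t)}
 \;=\;\Bigl(1+\tfrac{z}{\sqrt{q(t)}}\Bigr)^{-\gamma}\bigl(1+o(1)\bigr)
 \;\overset{?}{=}\;1-\frac{\gamma z}{\sqrt{q(t)}}+o\Bigl(\frac{1}{\sqrt{q(t)}}\Bigr).
\end{equation*}
The uniform convergence theorem gives the $(1+o(1))$ uniformly for $z$ in a compact interval, but carries \emph{no rate}; since $1+z/\sqrt{q(t)}\to 1$, you are applying it over a family of scales shrinking to $1$ and silently upgrading the error from $o(1)$ to $o(1/\sqrt{q(t)})$. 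Regular variation alone does not provide this: the slowly varying part of $U$ may oscillate on a logarithmic scale whose increments dominate $1/\sqrt{q(t)}$. This is exactly where precision is needed. Writing the remainder as $R_t(z)$ and $g_t$ for the density of $(Y_t-q(t))/\sqrt{q(t)}$, one has
\begin{equation*}
\frac{q(t)\,I(t)}{U(q(t)/t)}=-\sqrt{q(t)}\int z\Bigl[(1+z/\sqrt{q(t)})^{-\gamma}-1\Bigr]g_t(z)\,dz-\sqrt{q(t)}\int zR_t(z)\,g_t(z)\,dz;
\end{equation*}
the first integral tends to $\gamma$, but uniform smallness of $R_t$ only gives $\sqrt{q(t)}\cdot o(1)$ for the second, which need not converge (and, in fact, without further assumptions it need not be $O(1)$). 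As you yourself observe, the leading orders $\E[U(Y_t/t)]\sim U(q(t)/t)\sim\E[U(Y'_t/t)]$ cancel exactly; the result therefore lives at precisely the scale that the uniform convergence theorem, as stated, does not control.

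To close the gap along the lines you have drawn one needs either second-order regular variation (or a smoothly/normalized slowly varying representative of $U$, together with a separate argument transferring from $U$ to that representative), or a change of variables that tests $U$ only at ratios bounded away from $1$ with the cancellation expressed through an exact integral identity, as in the paper's $W_1,W_2$ decomposition. As written, the proposal does not establish the lemma.
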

\begin{proof}
The equality in \eqref{eq:karamata_ls} is obvious. Making change of variable $x=q(t)y/t$ and integrating by parts, we obtain
\begin{align*}
\int_{[0,\,\infty)}e^{-tx}(tx)^{q(t)}{\rm d}(-U(x))&=(q(t))^{q(t)}\int_{[0,\,\infty)}(ye^{-y})^{q(t)}{\rm d}_y(-U(q(t)y/t))\\
&=(q(t))^{q(t)}\int_{[0,\,\infty)}U(q(t)y/t){\rm d}_y\left((ye^{-y})^{q(t)}\right)\\
&=(q(t))^{q(t)}\left(\int_{[0,\,y_1)}+\int_{[y_1,\,y_2]}+\int_{(y_2,\,\infty)}\right)\cdots\\
&=(q(t))^{q(t)}(J_1(y_1,t)+J_2(y_1,y_2,t)+J_3(y_2,t)),
\end{align*}
where $y_1<1<y_2$ are the real roots of the equation $ye^{-y}=1/4$, see Fig.~\ref{graph}. Arguing exactly as in the proof of equations \eqref{eq:karamata_proof1} and \eqref{eq:karamata_proof2}, it can be checked the integrals $J_1(y_1,t)$ and $J_3(y_2,t)$ are negligible, that is,
\begin{equation}\label{eq:karamata_ls_proof11}
\int_{[0,\,\infty)}U(q(t)y/t){\rm d}_y\left((ye^{-y})^{q(t)}\right)~\sim~\int_{[y_1,\,y_2]}U(q(t)y/t){\rm d}_y\left((ye^{-y})^{q(t)}\right),\quad t\to\infty.
\end{equation}
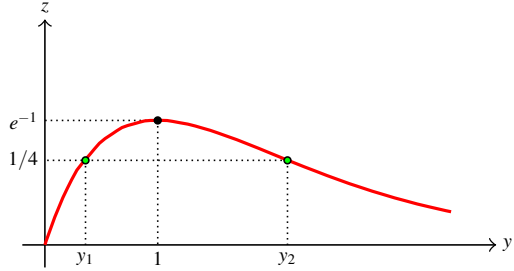
\begin{figure}[!ht]
\centering
 \scalebox{1.5}{\begin{tikzpicture}
\draw[->] (-0.2,0) -- (4.0,0) node[right,scale=0.5] {$y$};
\draw[->] (0,-0.2) -- (0,2) node[above,scale=0.5] {$z$};
\draw [densely dotted] (1.0,3*0.368) -- (1.0,0) node[below,scale=0.5] {$1$};
\draw [densely dotted] (1.0,3*0.368) -- (0.0,3*0.368) node[left,scale=0.5] {$e^{-1}$};
\draw [densely dotted] (2.15,3*0.25) -- (0.0,3*0.25) node[left,scale=0.5] {$1/4$};
\draw [densely dotted] (0.36,3*0.25) -- (0.36,3*0.0) node[below,scale=0.5] {$y_1$};
\draw [densely dotted] (2.15,3*0.25) -- (2.15,3*0.0) node[below,scale=0.5] {$y_2$};
\draw [thick,red] plot [smooth, tension=1] coordinates {(0,3*0) (0.2,3*0.168) (0.4,3*0.268) (0.6, 3*0.329) (0.8, 3*0.359) (1.0, 3*0.368) (1.2,3*0.361) (1.4, 3*0.345) (1.6,3*0.323) (1.8,3*0.298) (2.0,3*0.271) (2.2, 3*0.244) (2.4, 3*0.218) (2.6, 3*0.193) (2.8,3*0.17) (3.0,3*0.149)
(3.2,3*0.13) (3.4,3*0.113) (3.6,3*0.098)};
\draw[fill=black] (1.0,3*0.368) circle (0.03);
\draw[fill=green] (0.36,3*0.25) circle (0.03);
\draw[fill=green] (2.15,3*0.25) circle (0.03);
\end{tikzpicture}}
\caption{The plot of the function $y\mapsto ye^{-y}$ (solid red). The scale on the $z$-axis is three times larger than on the $y$-axis. The solid black dot is the maximum $e^{-1}$ attained at $y=1$, the green dots are the intersections with the horizontal line $z=1/4$. Two inverse functions $W_1$ and $W_2$ are obtained by inverting monotone pieces of $y\mapsto ye^{-y}$ on $[y_1,\,1]$ and $[1,\,y_2]$, respectively.}
\label{graph}
\end{figure}

Let us check that
\begin{equation}\label{eq:karamata_ls1}
\frac{(q(t))^{q(t)}}{\Gamma(1+q(t))}\int_{[y_1,\,y_2]}U(q(t)y/t){\rm d}_y\left((ye^{-y})^{q(t)}\right)~\sim~\gamma U(q(t)/t)/q(t),\quad t\to\infty.
\end{equation}
To this end, let $W_1:[1/4,\,e^{-1}]\mapsto [y_1,\,1]$ and $W_2:[1/4,\,e^{-1}]\mapsto [1,\,y_2]$ be the inverses of the function $y\mapsto ye^{-y}$ on its monotonicity intervals $[y_1,\,1]$ and $[1,\,y_2]$, respectively, see Fig.~\ref{graph}. Then, using the changes of variables $y=W_{1,2}(z^{1/q(t)})$, we obtain
\begin{align*}
&\hspace{-1cm}\int_{[y_1,\,y_2]}U\left(\frac{q(t)y}{t}\right){\rm d}_y\left((ye^{-y})^{q(t)}\right)\\
&=\left(\int_{[y_1,\,1]}+\int_{[1,\,y_2]}\right)U\left(\frac{q(t)y}{t}\right){\rm d}_y\left((ye^{-y})^{q(t)}\right)\\
&=\int_{[4^{-q(t)},\,e^{-q(t)}]}\left(U\left(\frac{q(t)}{t}W_1(z^{1/q(t)})\right)-U\left(\frac{q(t)}{t}W_2(z^{1/q(t)})\right)\right){\rm d}z.
\end{align*}
In view of the inequalities
$$
y_1\leq W_{1}(z^{1/q(t)})\leq 1\quad\text{and}\quad 1\leq W_{2}(z^{1/q(t)})\leq y_2,
$$
which hold for all $z\in [4^{-q(t)},\,e^{-q(t)}]$, the uniform convergence theorem for regularly varying functions, see Theorem 1.5.2 in \cite{BGT}, is applicable. Thus, for every fixed $\varepsilon\in (0,1)$ there exists $t_0>0$ such that for all $t>t_0$ and all $z\in [4^{-q(t)},\,e^{-q(t)}]$, it holds
\begin{multline*}
(1-\varepsilon)U(q(t)/t)\left((W_1(z^{1/q(t)}))^{-\gamma}-(W_2(z^{1/q(t)}))^{-\gamma}\right)\leq\\
U\left(\frac{q(t)}{t}W_1(z^{1/q(t)})\right)-U\left(\frac{q(t)}{t}W_2(z^{1/q(t)})\right)\leq\\
(1+\varepsilon)U(q(t)/t)\left((W_1(z^{1/q(t)}))^{-\gamma}-(W_2(z^{1/q(t)}))^{-\gamma}\right).
\end{multline*}
These inequalities demonstrate that \eqref{eq:karamata_ls1} follows from
\begin{equation}\label{eq:karamata_ls_proof12}
\lim_{t\to\infty}\frac{(q(t))^{q(t)+1}}{\Gamma(1+q(t))}\int_{[4^{-q(t)},\,e^{-q(t)}]}\left((W_1(z^{1/q(t)}))^{-\gamma}-(W_2(z^{1/q(t)}))^{-\gamma}\right){\rm d}z~=~\gamma.
\end{equation}
This latter can be checked by reversing the arguments. The changes of variables $y=W_{1,2}(z^{1/q(t)})$ yield
\begin{align*}
&\hspace{-1cm}\frac{(q(t))^{q(t)+1}}{\Gamma(1+q(t))}\int_{[4^{-q(t)},\,e^{-q(t)}]}\left((W_1(z^{1/q(t)}))^{-\gamma}-(W_2(z^{1/q(t)}))^{-\gamma}\right){\rm d}z\\
&=\frac{(q(t))^{q(t)+1}}{\Gamma(1+q(t))}\int_{[y_1,\,y_2]}y^{-\gamma}{\rm d}_y\left((ye^{-y})^{q(t)}\right)\\
&=\frac{(q(t))^{q(t)+1}}{\Gamma(1+q(t))}\frac{(q(t))^{\gamma}}{t^\gamma}\int_{[y_1,\,y_2]}(q(t)y/t)^{-\gamma}{\rm d}_y\left((ye^{-y})^{q(t)}\right)\\
&\sim~\frac{(q(t))^{q(t)+1}}{\Gamma(1+q(t))}\frac{(q(t))^{\gamma}}{t^\gamma}\int_{[0,\,\infty)}(q(t)y/t)^{-\gamma}{\rm d}_y\left((ye^{-y})^{q(t)}\right),\quad t\to\infty,
\end{align*}
where the last passage follows from \eqref{eq:karamata_ls_proof11} applied with $U(y)=y^{-\gamma}$. It remains to note that
$$
\int_{[0,\,\infty)}y^{-\gamma}{\rm d}_y\left((ye^{-y})^{q(t)}\right)=\gamma\int_0^{\infty}y^{q(t)-\gamma-1}e^{-yq(t)}{\rm d}y=\gamma (q(t))^{\gamma-q(t)}\Gamma(q(t)-\gamma),
$$
which readily implies \eqref{eq:karamata_ls_proof12}. The proof is complete.
\end{proof}

\begin{remark}\label{lem:karamata_ls_increasing}
Using the same method we can also prove the analogue of Lemma \ref{lem:karamata_ls} for nondecreasing integrators $U$. More precisely, if $q\in\mathcal{Q}$ and $U:(0,\,\infty)\mapsto (0,\,\infty)$ is a right-continuous nondecreasing function such that $U(x)=x^{\gamma}\ell(1/x)$ for some $\ell$ slowly varying at infinity and $\gamma>0$, then 
\begin{equation}\label{eq:karamata_ls_increasing}
\int_{[0,\,\infty)}e^{-tx}\frac{(tx)^{q(t)}}{\Gamma(q(t)+1)}{\rm d}U(x)~\sim~\gamma U(q(t)/t)/q(t)~=~\frac{\gamma t^{-\gamma}\ell(t/q(t))}{q^{1-\gamma}(t)},\quad t\to\infty.
\end{equation}
\end{remark}

\section{Proof of Theorem \ref{thm:main_ks_poisson}}\label{sec:proof1}
\textcolor{black}{It is enough to show that, for arbitrary fixed $|z_i|\leq 1$, $i=1,\ldots,m$,
\begin{equation}\label{eq:gen_func_conv_cond_ks}
\E\left[\prod_{i=1}^{m}z_i^{K^{\ast}_{r^{\ast}_i(t)}(t)}\right]\to\exp\left(-\sum_{i=1}^{m}(u^{\ast}_i)^{-\alpha^{\ast}-1}(1-z_i)\right),\quad t\to\infty.
\end{equation}
This yields \eqref{eq:conv_poisson_ks} because the right-hand side is a multivariate generating function of $(\mathcal{P}^{\ast}_1,\mathcal{P}^{\ast}_2,\ldots,\mathcal{P}^{\ast}_m)$.}

\textcolor{black}{To prove \eqref{eq:gen_func_conv_cond_ks} pick $t_0>0$ so large that $r^{\ast}_i(t) < r^{\ast}_j(t)$ for all $t>t_0$ and $1\leq i<j\leq m$. By the independence of $(\mathcal{Z}^{\ast}_{\Pi(t),\,k})_{k\geq 1}$ we have
$$
\E\left[\prod_{i=1}^{m}z_i^{K^{\ast}_{r^{\ast}_i(t)}(t)}\right]=\prod_{k\geq 1}\E \left[\prod_{i=1}^{m}z_i^{\1_{\{\mathcal{Z}^{\ast}_{\Pi(t),\,k}=r^{\ast}_i(t)\}}}\right]\\
=\prod_{k\geq 1}\left(1-\sum_{i=1}^{m}(1-z_i)\P\{\mathcal{Z}^{\ast}_{\Pi(t),\,k}=r^{\ast}_i(t)\}\right).
$$
By an elementary fact, see Lemma 4.8 in \cite{Kallenberg}, the right-hand side of the last display converges to the right-hand side of \eqref{eq:gen_func_conv_cond_ks} provided
\begin{equation}\label{prop:des1_ks}
\sum_{k\geq 1}\P\{\mathcal{Z}^{\ast}_{\Pi(t),\,k}=r^{\ast}_i(t)\}\to (u^{\ast}_i)^{-\alpha^{\ast}-1},\quad t\to\infty,
\end{equation}
for all $i=1,\ldots,m$. Using that the distribution of $\mathcal{Z}^{\ast}_{\Pi(t),\,k}$ is Poisson with parameter $p_k^{\ast}t$ and recalling the definition \eqref{eq:tho_ast_def} of the function $\rho^{\ast}$, we can write
$$
\sum_{k\geq 1}\P\{\mathcal{Z}^{\ast}_{\Pi(t),\,k}=r^{\ast}_i(t)\}=\int_{(0,\,\infty)}e^{-tx}\frac{(tx)^{r^{\ast}_i(t)}}{r^{\ast}_i(t)!}{\rm d}(-\rho^{\ast}(x)),\quad i=1,\ldots,m.
$$
Note that $\rho^{\ast}$ is nonincreasing and $\rho^{\ast}(x)=0$ for $x\geq 1$. In what follows we shall frequently write integrals with $-\rho^{\ast}(x)$ (or $-\rho(x)$) in the integrator over infinite intervals $(0,\,\infty)$ keeping in mind that that the actual domain of integration is $(0,1)$.}

\textcolor{black}{Applying Lemma \ref{lem:karamata_ls} with $U(x)=\rho^{\ast}(x)$ and $q(t)=r^{\ast}_i(t)$ we infer 
\begin{equation}\label{eq:k_n_r_mean_ks}
\sum_{k\geq 1}\P\{\mathcal{Z}^{\ast}_{\Pi(t),\,k}=r^{\ast}_i(t)\}~\sim~\frac{\alpha^{\ast} t^{\alpha^{\ast}}}{(r^{\ast}_i(t))^{\alpha^{\ast}+1}}\ell^{\ast}(t/r^{\ast}_i(t)),\quad t\to\infty,
\end{equation}
for all $i=1,\ldots,m$. Since, by \eqref{eq:tilde_r_def},
$$
\frac{\alpha^{\ast} t^{\alpha^{\ast}}\ell^{\ast}(t/r^{\ast}_i(t))}{(r^{\ast}_i(t))^{\alpha^{\ast}+1}}~\sim~\frac{\alpha^{\ast} t^{\alpha^{\ast}}\ell^{\ast}(t/r^{\ast}(t))}{(u^{\ast}_i r^{\ast}(t))^{\alpha^{\ast}+1}}~\to~(u^{\ast}_i)^{-\alpha^{\ast}-1},\quad t\to\infty,
$$
we obtain \eqref{prop:des1_ks} for all $i=1,\ldots,m$. The proof is complete.}

\section{Proofs of Theorems \ref{thm:main}, \ref{thm:main2} and \ref{thm:main3}: the Poissonized model}\label{sec:proof2}
\textcolor{black}{Recalling our convention on using the ``starred'' and ``unstarred'' notation, put
\begin{equation}\label{eq:eq_k_r_t_rep}
K_r(t)=\sum_{k\geq 1}\1_{\{\mathcal{Z}_{\Pi(t),\,k}=r\}},\quad t\geq 0,
\end{equation}
and also
\begin{equation}\label{eq:eq_k_r_t_geq_rep}
K_{\geq r}(t)=\sum_{k\geq 1}\1_{\{\mathcal{Z}_{\Pi(t),\,k}\geq r\}},\quad t\geq 0.
\end{equation}
Here is the Poissonized version of Theorem \ref{thm:main}. Note that the case $\alpha=1$ is included.
\begin{proposition}\label{prop:main_poisson}
Assume that \eqref{eq:reg_var_nu} holds for some $\alpha\in (0,\,1]$ and let $r$ and $r_i$ be as in Theorem \ref{thm:main}.
$$
\left(K_{r_1(t)}(t),K_{r_2(t)}(t),\ldots,K_{r_m(t)}(t)\right) \todistrt (\mathcal{P}_1,\mathcal{P}_2,\ldots,\mathcal{P}_m).
$$
\end{proposition}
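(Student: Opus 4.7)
The plan is to reduce the statement to a ``conditional'' version of Theorem \ref{thm:main_ks_poisson} by conditioning on the subordinator $S$. Once the trajectory of $S$ is fixed, the frequencies $(p_k)_{k\in\N}$ from \eqref{eq:rc_frequencies} become deterministic, and so does the counting function $\rho$ from \eqref{eq:rho_def}. By \eqref{eq:gne_pit_yor_conv}, for almost every realization of $S$,
$$
\rho(x)\sim \mathcal{I}_{\alpha}\, x^{-\alpha}\ell(1/x),\quad x\downarrow 0,
$$
so the Karlin scheme with frequencies $(p_k)$ conditionally satisfies \eqref{eq:reg_var_ks} with $\alpha^{\ast}=\alpha$ and $\ell^{\ast}(y)=\mathcal{I}_{\alpha}\ell(y)$. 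Crucially, the function $r(t)$ in the statement is built from $\ell$ rather than from $\mathcal{I}_{\alpha}\ell$; the resulting mismatch is exactly what will produce the random mixing in the limit.

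I would then repeat the Poisson convergence argument from the proof of Theorem \ref{thm:main_ks_poisson} path-by-path in $S$. Applying Lemma \ref{lem:karamata_ls} to $U=\rho$ with $\gamma=\alpha$ and $q(t)=r_i(t)$ gives, for almost every $S$,
$$
\sum_{k\geq 1}\P\{\mathcal{Z}_{\Pi(t),k}=r_i(t)\mid S\}=\int_{[0,\infty)}e^{-tx}\frac{(tx)^{r_i(t)}}{r_i(t)!}\,{\rm d}(-\rho(x))\sim \frac{\alpha\, \mathcal{I}_{\alpha}\, t^{\alpha}}{(r_i(t))^{\alpha+1}}\ell\!\left(\frac{t}{r_i(t)}\right).
$$
Combining $r_i(t)\sim u_i r(t)$, the defining relation \eqref{eq:r_def} for $r$, and the uniform convergence theorem for slowly varying functions, the right-hand side converges almost surely to $u_i^{-\alpha-1}\mathcal{I}_{\alpha}$.

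Next, the conditional independence of $(\mathcal{Z}_{\Pi(t),k})_{k\geq 1}$ given $S$ yields, for every $|z_i|\leq 1$,
$$
\E\!\left[\prod_{i=1}^{m}z_i^{K_{r_i(t)}(t)}\,\Big|\,S\right]=\prod_{k\geq 1}\!\left(1-\sum_{i=1}^{m}(1-z_i)\P\{\mathcal{Z}_{\Pi(t),k}=r_i(t)\mid S\}\right).
$$
By Lemma 4.8 in \cite{Kallenberg} and the previous step, this converges almost surely to $\exp\bigl(-\sum_{i=1}^{m}(1-z_i)u_i^{-\alpha-1}\mathcal{I}_{\alpha}\bigr)$, which is the conditional joint generating function of $(\mathcal{P}_1,\ldots,\mathcal{P}_m)$ given $S$. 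Taking unconditional expectations and invoking dominated convergence (the integrand is bounded by $1$ in modulus) produces the convergence of the unconditional joint generating functions, which is equivalent to the claimed joint weak convergence.

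The main obstacle is bookkeeping rather than a new idea: one cannot simply cite Theorem \ref{thm:main_ks_poisson} as a black box conditionally on $S$, because the threshold $r(t)$ was defined using the deterministic $\ell$ and not using the conditional slowly varying factor $\mathcal{I}_{\alpha}\ell$. One therefore has to rerun the saddle-point estimate of Lemma \ref{lem:karamata_ls} with the conditional tail of $\rho$ and absorb the random factor $\mathcal{I}_{\alpha}$ into the conditional mean. This is precisely the mechanism that turns the deterministic Poisson limits of Theorem \ref{thm:main_ks_poisson} into the mixed Poisson limits of the present proposition.
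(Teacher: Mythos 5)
Your argument is correct and follows essentially the same outline as the paper: condition on $S$, use \eqref{eq:gne_pit_yor_conv} to identify the conditional counting function as regularly varying with slowly varying factor $\mathcal{I}_{\alpha}\ell$, get the conditional Poisson limit, then deduce the unconditional statement by dominated convergence of generating functions. The one place where you diverge from the paper is in the final remark: you assert that one ``cannot simply cite Theorem~\ref{thm:main_ks_poisson} as a black box conditionally on $S$'' because $r(t)$ was calibrated to $\ell$ rather than to $\mathcal{I}_{\alpha}\ell$, and so you rerun the saddle-point estimate of Lemma~\ref{lem:karamata_ls} inside the proof. In fact the paper does apply Theorem~\ref{thm:main_ks_poisson} pathwise as a black box, precisely by re-parametrizing: take $\alpha^{\ast}=\alpha$, $\ell^{\ast}=\mathcal{I}_{\alpha}\ell$, $r^{\ast}(t)=(\mathcal{I}_{\alpha})^{1/(1+\alpha)}r(t)$ and $u^{\ast}_i=(\mathcal{I}_{\alpha})^{-1/(1+\alpha)}u_i$. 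Since $(\mathcal{I}_{\alpha})^{1/(1+\alpha)}$ is a constant once $S$ is fixed and $\ell$ is slowly varying, $r^{\ast}$ satisfies \eqref{eq:tilde_r_def} for the conditional scheme, and $r_i(t)\sim u^{\ast}_i r^{\ast}(t)$, so the conditional limit has mean $(u^{\ast}_i)^{-\alpha-1}=\mathcal{I}_{\alpha}u_i^{-\alpha-1}$, exactly as required. Your version is mathematically sound and amounts to inlining the proof of Theorem~\ref{thm:main_ks_poisson} rather than invoking it; the paper's reparametrization buys a shorter proof, while yours makes the mechanism by which $\mathcal{I}_{\alpha}$ enters the limit more explicit. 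Either way, the substance is the same.
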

\begin{proof}
According to formula \eqref{eq:gne_pit_yor_conv} and in view of assumption \eqref{eq:reg_var_nu} we know that for a.s.~every sample path of the subordinator $S$, it holds
$$
\rho(x)~\sim~x^{-\alpha}\ell(1/x)\mathcal{I}_{\alpha},\quad x\downarrow 0.
$$
Thus, we can apply Theorem \ref{thm:main_ks_poisson} pathwise with 
\begin{multline*}
\alpha^{\ast}=\alpha,\quad \ell^{\ast}(x)=\mathcal{I}_{\alpha}\ell(x),\quad
r^{\ast}(t)=(\mathcal{I}_{\alpha})^{1/(1+\alpha)}r(t),\\
r^{\ast}_i(t)=r_i(t)\quad\text{and}\quad u^{\ast}_i=(\mathcal{I}_{\alpha})^{-1/(1+\alpha)}u_i\quad i=1,\ldots,m,
\end{multline*}
to conclude that the conditional distribution of $\left(K_{r_1(t)}(t),K_{r_2(t)}(t),\ldots,K_{r_m(t)}(t)\right)$, given $S$, converges weakly, as $t\to\infty$, to the conditional distribution of $(\mathcal{P}_1,\mathcal{P}_2,\ldots,\mathcal{P}_m)$ for a.s.~every sample path of $S$, that is,
$$
\P\left\{\left(K_{r_1(t)}(t),K_{r_2(t)}(t),\ldots,K_{r_m(t)}(t)\right)\in \cdot \Big| S\right\}\toast \P\left\{\left((\mathcal{P}_1,\mathcal{P}_2,\ldots,\mathcal{P}_m)\right)\in \cdot \Big| S\right\},\quad t\to\infty.
$$
By the Lebesgue dominated convergence theorem this convergence holds also unconditionally, thereby finishing the proof of Theorem \ref{prop:main_poisson}.
\end{proof}}

The next proposition is the counterpart of Theorem \ref{thm:main2}. Note that we do not assume regular variation of $w$ here.
\begin{proposition}\label{prop:slow_q}
Assume that $w\in\mathcal{W}^{r}\cap\mathcal{N}$. Then \eqref{eq:reg_var_nu} implies
$$
\frac{(w(t))^{\alpha+1}K_{w(t)}(t)}{t^{\alpha}\ell(t/w(t))}\toprobabt \alpha \mathcal{I}_{\alpha}.
$$
\end{proposition}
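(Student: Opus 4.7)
The plan is to condition on the subordinator $S$ and exploit the conditional independence of the Poisson occupancy counts $(\mathcal{Z}_{\Pi(t),k})_{k\geq 1}$ given $S$. Under this conditioning, $K_{w(t)}(t) = \sum_{k\geq 1}\1_{\{\mathcal{Z}_{\Pi(t),k} = w(t)\}}$ is a sum of independent Bernoulli random variables, so the classical mean/variance strategy combined with the conditional Chebyshev inequality applies. The normalizing constant will be the deterministic quantity $c(t) := t^{\alpha}\ell(t/w(t))/(w(t))^{\alpha+1}$.

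The first main step is to analyse the conditional mean
$$
\E[K_{w(t)}(t) \mid S] = \int_{(0,\infty)} e^{-tx}\frac{(tx)^{w(t)}}{w(t)!}\, {\rm d}(-\rho(x)).
$$
By \eqref{eq:gne_pit_yor_conv} and \eqref{eq:reg_var_nu}, on a probability-one event the random function $\rho$ is right-continuous, nonincreasing, and satisfies $\rho(x) \sim \mathcal{I}_{\alpha} x^{-\alpha}\ell(1/x)$ as $x\downarrow 0$. Since $w \in \mathcal{W}^{r} \cap \mathcal{N} \subset \mathcal{Q}$, I would apply Lemma \ref{lem:karamata_ls} pathwise with $U = \rho$, $\gamma = \alpha$, $q(t) = w(t)$ and slowly varying factor $\mathcal{I}_{\alpha}\ell$, to conclude
$$
\E[K_{w(t)}(t) \mid S] \sim \frac{\alpha\, t^{\alpha}\ell(t/w(t))}{(w(t))^{\alpha+1}}\, \mathcal{I}_{\alpha}, \quad t\to\infty,
$$
almost surely. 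Dividing by $c(t)$ shows $\E[K_{w(t)}(t)\mid S]/c(t) \to \alpha\mathcal{I}_{\alpha}$ a.s.

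For the second step, conditional independence yields $\mathrm{Var}[K_{w(t)}(t)\mid S] \leq \E[K_{w(t)}(t)\mid S]$, so it suffices that $c(t) \to \infty$. This divergence follows from $w(t) = o(r(t))$, the defining relation \eqref{eq:r_def} which gives $r(t)^{\alpha+1} \sim \alpha t^{\alpha}\ell(t/r(t))$, and Potter's bounds applied to compare $\ell(t/w(t))$ with $\ell(t/r(t))$ as $r(t)/w(t) \to \infty$. Combining these, $\mathrm{Var}[K_{w(t)}(t)\mid S]/c(t)^2 \to 0$ a.s., and the conditional Chebyshev inequality delivers
$$
\P\{|K_{w(t)}(t)/c(t) - \E[K_{w(t)}(t)\mid S]/c(t)| > \varepsilon \mid S\} \to 0 \quad \text{a.s.}
$$
for every $\varepsilon > 0$; bounded convergence lifts this to unconditional convergence in probability, and merging with the first step completes the argument. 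I expect the main obstacle to be the pathwise application of Lemma \ref{lem:karamata_ls}: one must ensure on a single probability-one event that $\rho$ is simultaneously nonincreasing, right-continuous, and regularly varying at zero with the prescribed (random) slowly varying factor $\mathcal{I}_{\alpha}\ell$, and that the local boundedness hypotheses are satisfied. The Potter-bound verification of $c(t)\to\infty$ and the variance argument are then routine.
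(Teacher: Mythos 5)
Your proposal is correct and follows essentially the same route as the paper: conditioning on $S$, applying Lemma \ref{lem:karamata_ls} pathwise to $\rho$ (via \eqref{eq:gne_pit_yor_conv}) to get the asymptotics of $\E[K_{w(t)}(t)\mid S]$, bounding the conditional variance by the conditional mean, and concluding via conditional Chebyshev plus dominated convergence. The only cosmetic difference is that you normalize by the deterministic $c(t)$ while the paper normalizes by $\E[K_{w(t)}(t)\mid S]$ and then invokes Slutsky; and the "obstacle" you flag is not one, since $\rho$ is by construction a nonincreasing right-continuous step function and its regular variation at $0$ is exactly \eqref{eq:gne_pit_yor_conv}.
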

\begin{proof}
From representation \eqref{eq:eq_k_r_t_rep} and Lemma \ref{lem:karamata_ls} applied with $U(x)=\rho(x)$ and $q(t)=w(t)$ it follows that
\begin{equation}\label{eq:cond_mean_q}
\E [K_{w(t)}(t)|S]=\sum_{k\geq 1}\P\{\mathcal{Z}_{\Pi(t),\,k}=w(t)|S\}~\sim~\frac{\alpha\mathcal{I}_{\alpha}t^{\alpha}}{(w(t))^{\alpha+1}}\ell(t/w(t)),\quad t\to\infty\quad\text{a.s.},
\end{equation}
and by our assumptions on $w$ the right-hand side is divergent to $+\infty$ a.s. Further, from \eqref{eq:eq_k_r_t_rep} it follows that
$$
{\rm Var}\, [K_{w(t)}(t)|S]=\sum_{k\geq 1}{\rm Var}\, [\1_{\{\mathcal{Z}_{\Pi(t),\,k}=w(t)\}}|S] \leq \E [K_{w(t)}(t)|S].
$$
Thus, by Chebyshev's inequality, for every fixed $\varepsilon>0$,
$$
\P\left\{\left|\frac{K_{w(t)}(t)}{\E [K_{w(t)}(t)|S]}-1\right|>\varepsilon \Big| S \right\}\toast 0.
$$
By the dominated convergence theorem
$$
\frac{K_{w(t)}(t)}{\E [K_{w(t)}(t)|S]}\toprobabt 1,
$$
which, by Slutsky's lemma and  \eqref{eq:cond_mean_q}, yields the desired claim.
\end{proof}
Finally, here is a counterpart of Theorem \ref{thm:main3} for the Poissonized model.
\begin{proposition}\label{prop:k_n_r_geq}
Assume that $q\in\mathcal{Q}\cap\mathcal{N}$. Then \eqref{eq:reg_var_nu} implies
$$
\frac{(q(t))^{\alpha}K_{\geq q(t)}(t)}{t^{\alpha}\ell(t/q(t))}\toprobabt \mathcal{I}_{\alpha}.
$$
\end{proposition}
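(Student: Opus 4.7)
My plan is to mirror the second-moment argument used in the proof of Proposition~\ref{prop:slow_q}. First I would derive an almost-sure asymptotic for the conditional mean $\E[K_{\geq q(t)}(t)\mid S]$; then, exploiting the conditional independence of the occupancy indicators given $S$, I would bound the conditional variance by the conditional mean and apply Chebyshev's inequality in conditional form; finally, the Lebesgue dominated convergence theorem will upgrade the resulting conditional convergence to the unconditional convergence in probability, after which Slutsky's lemma delivers the claim.

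The heart of the argument is the mean asymptotic. Starting from \eqref{eq:eq_k_r_t_geq_rep} and using that, given $S$, the variables $\mathcal{Z}_{\Pi(t),k}$ are independent Poisson$(p_k t)$ over $k$, I would invoke the Gamma--Poisson duality $\P\{\mathrm{Poisson}(\mu)\geq q\}=\P\{G_q\leq \mu\}$ with $G_q$ a Gamma$(q,1)$ random variable independent of $S$, to obtain
\begin{equation*}
\E[K_{\geq q(t)}(t)\mid S]=\sum_{k\geq 1}\P\{G_{q(t)}\leq p_k t\mid S\}=\E[\rho(G_{q(t)}/t)\mid S]=\frac{1}{\Gamma(q(t))}\int_{0}^{\infty}\rho(y/t)\,y^{q(t)-1}e^{-y}\,{\rm d}y.
\end{equation*}
The function $V(x):=\rho(1/x)$ is almost surely positive, locally bounded, and by \eqref{eq:gne_pit_yor_conv} satisfies $V(x)\sim x^{\alpha}\ell(x)\mathcal{I}_{\alpha}$ as $x\to\infty$. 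A pathwise application of Corollary~\ref{cor:karamata} with $U=V$ and $\beta=\alpha$ then yields
\begin{equation*}
\E[K_{\geq q(t)}(t)\mid S]~\sim~\rho(q(t)/t)~\sim~\frac{t^{\alpha}\ell(t/q(t))\mathcal{I}_{\alpha}}{q(t)^{\alpha}},\quad t\to\infty,\quad\text{a.s.},
\end{equation*}
where the second equivalence uses \eqref{eq:gne_pit_yor_conv} once more together with $q(t)/t\to 0$; in particular the conditional mean diverges almost surely.

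For the variance step, conditional independence of the indicators $\1_{\{\mathcal{Z}_{\Pi(t),k}\geq q(t)\}}$ given $S$ yields the Poisson-type bound ${\rm Var}\,[K_{\geq q(t)}(t)\mid S]\leq \E[K_{\geq q(t)}(t)\mid S]$. Combining this with conditional Chebyshev's inequality and the divergence of the conditional mean gives, for every $\varepsilon>0$,
\begin{equation*}
\P\left\{\left|\frac{K_{\geq q(t)}(t)}{\E[K_{\geq q(t)}(t)\mid S]}-1\right|>\varepsilon \,\bigg|\, S\right\}\toast 0,
\end{equation*}
and the dominated convergence theorem removes the conditioning. Slutsky's lemma, combined with the mean asymptotic, finishes the proof. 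The only mildly delicate point I expect is the pathwise invocation of Corollary~\ref{cor:karamata}, but this amounts to a routine verification of its hypotheses on the almost-sure event supporting \eqref{eq:gne_pit_yor_conv}; the argument is essentially the $\geq r$ analogue of Proposition~\ref{prop:slow_q}, with Corollary~\ref{cor:karamata} playing the role that Lemma~\ref{lem:karamata_ls} played there.
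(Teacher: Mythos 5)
Your proof is correct and takes essentially the same route as the paper: the key step is expressing $\E[K_{\geq q(t)}(t)\mid S]$ as $\frac{1}{\Gamma(q(t))}\int_0^\infty \rho(y/t)\,y^{q(t)-1}e^{-y}\,{\rm d}y$ and then invoking Corollary~\ref{cor:karamata} pathwise, followed by the standard variance bound, conditional Chebyshev, and dominated convergence. The only (cosmetic) difference is how that integral identity is obtained: you use the Gamma--Poisson duality $\P\{\mathrm{Poisson}(\mu)\geq q\}=\P\{G_q\leq\mu\}$ directly, whereas the paper differentiates the Poisson upper-tail CDF and integrates by parts against ${\rm d}(-\rho(x))$ -- two presentations of the same computation.
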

\begin{proof}
Representation \eqref{eq:eq_k_r_t_geq_rep} implies
\begin{align*}
\E [K_{\geq q(t)}(t)|S]&=\sum_{k\geq 1}\P\{\mathcal{Z}_{\Pi(t),\,k}\geq q(t)|S\}=\int_{(0,\,\infty)}\left(\sum_{j\geq q(t)}e^{-tx}\frac{(tx)^j}{j!}\right){\rm d}(-\rho(x))\\
&=t\int_{(0,\,\infty)}\rho(x)e^{-tx}\frac{(tx)^{q(t)-1}}{(q(t)-1)!}{\rm d}x=\int_{(0,\,\infty)}\rho(y/t)e^{-y}\frac{y^{q(t)-1}}{(q(t)-1)!}{\rm d}x,
\end{align*}
where the penultimate equality follows upon integration by parts from an easily checked relation
$$
\frac{{\rm d}}{{\rm d}\lambda}\left(\sum_{j=k}^{\infty}e^{-\lambda}\frac{\lambda^j}{j!}\right)=e^{-\lambda}\frac{\lambda^{k-1}}{(k-1)!},\quad k\in\N.
$$
Thus, Corollary \ref{cor:karamata} applied with $U(x)=\rho(1/x)$ yields
$$
\E [K_{\geq q(t)}(t)|S]~\sim~\rho(q(t)/t)~\sim~\mathcal{I}_{\alpha}(t/q(t))^{\alpha}\ell(t/q(t)),\quad t\to\infty\quad\text{a.s.},
$$
by \eqref{eq:gne_pit_yor_conv}. Since $t/q(t)\to\infty$, as $t\to\infty$, we have $\E [K_{\geq q(t)}(t)|S]\to\infty$ a.s. From the inequality ${\rm Var}\, [K_{\geq q(t)}(t)|S]\leq \E [K_{\geq q(t)}(t)|S]$, we obtain
$$
\frac{K_{\geq q(t)}(t)}{\E [K_{\geq q(t)}(t)|S]}\toprobabt 1,
$$
by Chebyshev's inequality and dominated convergence. This completes the proof of Proposition \ref{prop:k_n_r_geq}.
\end{proof}

Arguing in the same vein as in the derivation of \eqref{eq:cond_mean_q} it can be easily checked that whenever $q\in\mathcal{Q}\cap\mathcal{N}$ is such that $r(t)=o(q(t))$, then
$$
\P\{K_{q(t)}(t)\neq 0|S\}=\P\{K_{q(t)}(t)\geq 1|S\}\leq \E[K_{q(t)}(t)|S]\toast 0.
$$
Thus,
\begin{equation}\label{eq:to_zero_poisson}
K_{q(t)}(t)\toprobabt 0,
\end{equation}
which is the analogue of \eqref{eq:to_zero} for the Poissonized model.

\section{Proofs of Theorems \ref{thm:main}, \ref{thm:main2} and \ref{thm:main3}: dePoissonization}\label{sec:proof3} This part of the proofs is called dePoissonization and its aim is to deduce Theorems \ref{thm:main}, \ref{thm:main2} and \ref{thm:main3} from Propositions \ref{prop:main_poisson}, \ref{prop:slow_q} and \ref{prop:k_n_r_geq}, respectively. \textcolor{black}{There exists a number of general results of this flavour in the literature. For example, the aforementioned Proposition 2.2 in \cite{Barbour+Gnedin:2009}. However, we have not been able to apply them without imposing extra regularity assumptions. Our approach works only under the assumption $\alpha<1$ explaining the appearance of this condition in our main results.}

We start with an easier implication Proposition \ref{prop:k_n_r_geq} $\,\,\Rightarrow\,\,$ Theorem \ref{thm:main3}. It is simpler because $(K_{n,\,\geq i})_{n\in\N}$ is nondecreasing.

\subsection{Proof of Theorem \ref{thm:main3} using Proposition \ref{prop:k_n_r_geq}.} Fix $\varepsilon\in(0,\,1)$ and note that on the event
$\{\Pi((1-\varepsilon)t)\leq \lfloor t\rfloor \leq \Pi((1+\varepsilon)t)  \}$ we have
$$
\frac{(q(\lfloor t\rfloor))^{\alpha}K_{\geq q(\lfloor t\rfloor)}((1-\varepsilon)t)}{\lfloor t\rfloor^{\alpha}\ell(\lfloor t\rfloor/q(\lfloor t\rfloor))}
\leq\frac{(q(\lfloor t\rfloor))^{\alpha}K_{\lfloor t\rfloor,\,\geq q(\lfloor t\rfloor)}}{\lfloor t\rfloor^{\alpha}\ell(\lfloor t\rfloor/q(\lfloor t\rfloor))}\leq \frac{(q(\lfloor t\rfloor))^{\alpha}K_{\geq q(\lfloor t\rfloor)}((1+\varepsilon)t)}{\lfloor t\rfloor^{\alpha}\ell(\lfloor t\rfloor/q(\lfloor t\rfloor))}.
$$
Put $q^{\pm\varepsilon}(t):=q(\lfloor t/(1\pm\varepsilon) \rfloor)$ and note that $q\in\mathcal{Q}\cap\mathcal{N}$ implies $q^{\pm\varepsilon}\in\mathcal{Q}\cap\mathcal{N}$. Further,
\begin{multline*}
\frac{(q(\lfloor t\rfloor))^{\alpha}K_{\geq q(\lfloor t\rfloor)}((1\pm \varepsilon)t)}{\lfloor t\rfloor^{\alpha}\ell(\lfloor t\rfloor/q(\lfloor t\rfloor))}=\frac{(q^{\pm\varepsilon}(t(1\pm\varepsilon)))^{\alpha}K_{q^{\pm\varepsilon}(t(1\pm\varepsilon))}((1\pm\varepsilon)t)}{\lfloor t\rfloor^{\alpha}\ell(\lfloor t\rfloor/q^{\pm\varepsilon}(\lfloor (1\pm\varepsilon)t\rfloor))}\\
\sim~\frac{(q^{\pm\varepsilon}(t(1\pm\varepsilon)))^{\alpha}K_{q^{\pm\varepsilon}(t(1\pm\varepsilon))}((1\pm\varepsilon)t)}{\lfloor t\rfloor^{\alpha}\ell(\lfloor (1\pm\varepsilon)t\rfloor/q^{\pm\varepsilon}(\lfloor (1\pm\varepsilon)t\rfloor))},\quad t\to\infty\quad\text{a.s.}
\end{multline*}
by the slow variation of $\ell$. By Proposition \ref{prop:k_n_r_geq} the right-hand side converges in probability to $(1\pm\varepsilon)^{\alpha}\mathcal{I}_{\alpha}$, as $t\to\infty$. Sending $\varepsilon\downarrow 0$ and noting that
\begin{equation}\label{eq:wlln_poisson}
\lim_{t\to\infty}\P\{\Pi((1-\varepsilon)t)\leq \lfloor t\rfloor \leq \Pi((1+\varepsilon)t)\}=1,
\end{equation}
we arrive at the conclusion of Theorem \ref{thm:main3}.

For the remaining implications, namely Proposition \ref{prop:main_poisson} $\,\,\Rightarrow\,\,$ Theorem \ref{thm:main} and Proposition \ref{prop:slow_q} $\,\,\Rightarrow\,\,$ Theorem \ref{thm:main2}, more sophisticated arguments are necessary due to lack of monotonicity of $(K_{n,\,i})_{n\in\N}$, $i\in\N$. In particular, the first order result for $(\Pi(t))$ given by \eqref{eq:wlln_poisson} is not sufficient.

\subsection{Proofs of Theorems \ref{thm:main} and \ref{thm:main2} using Propositions \ref{prop:main_poisson} and \ref{prop:slow_q}.}
We shall prove both implications simultaneously. Throughout this subsection $v(t)$ denotes either $r_i(t)$, for some fixed $i=1,\ldots,m$, in the settings of Theorem \ref{thm:main} or $w(t)$ in the settings of Theorem \ref{thm:main2}. In both cases $v$ is regularly varying and integer-valued. Set
$$
c(t)~:=~t^{\alpha}\ell(t/v(\lfloor t\rfloor))/(v(\lfloor t\rfloor))^{\alpha+1}~\sim~t^{\alpha}\ell(t/v(t))/(v(t))^{\alpha+1},\quad t\to\infty.
$$
It is enough to prove
\begin{equation}\label{eq:depoisson1}
\frac{K_{v(\lfloor t\rfloor)}(t)-K_{\lfloor t\rfloor,\,v(\lfloor t\rfloor)}}{c(t)}=\frac{K_{\Pi(t),\,v(\lfloor t\rfloor)}-K_{\lfloor t\rfloor,\,v(\lfloor t\rfloor)}}{c(t)}\toprobabt 0.
\end{equation}
Indeed, under the assumptions of Theorem \ref{thm:main}, $c(t)\to c$, as $t\to\infty$, for some $c>0$. Therefore, \eqref{eq:depoisson1} is equivalent to $\P\{K_{v(\lfloor t\rfloor)}(t)\neq K_{\lfloor t\rfloor,\,v(\lfloor t\rfloor)}\}\to 0$, as $t\to\infty$. This proves that Proposition \ref{prop:main_poisson} and \eqref{eq:depoisson1} yield Theorem \ref{thm:main}. Similarly, Proposition \ref{prop:slow_q} and \eqref{eq:depoisson1} yield Theorem \ref{thm:main2}.

Fix $T>0$, $\delta>0$ and note that for large enough $t>0$ we have
\begin{multline*}
\P\{|K_{\Pi(t),\,v(\lfloor t\rfloor )}-K_{\lfloor t\rfloor,\,v(\lfloor t\rfloor)}|\geq \delta c(t)\}\leq
1-\P\{\Pi(t-T\sqrt{t})\leq \lfloor t\rfloor \leq \Pi(t+T\sqrt{t}) \}\\
+\P\{\Pi(t-T\sqrt{t})\leq \lfloor t\rfloor \leq \Pi(t+T\sqrt{t}) ,\,|K_{\Pi(t),\,v(\lfloor t\rfloor )}-K_{\lfloor t\rfloor,\,v(\lfloor t\rfloor)}|\geq \delta c(t)\}.
\end{multline*}
By the central limit theorem for Poisson processes we see that it is enough to check
$$
\lim_{T\to\infty}\limsup_{t\to\infty}\P\{\Pi(t-T\sqrt{t})\leq \lfloor t\rfloor \leq \Pi(t+T\sqrt{t}),|K_{\Pi(t),\,v(\lfloor t\rfloor )}-K_{\lfloor t\rfloor,\,v(\lfloor t\rfloor)}|\geq \delta c(t)\}=0,
$$
for every $\delta>0$, which in turn follows from
\begin{equation}\label{eq:depoisson_proof2}
\limsup_{t\to\infty}\frac{\E\left(\sup_{s\in [t-T\sqrt{t},\,t+T\sqrt{t}]}|K_{\Pi(s),\,v(\lfloor t\rfloor )}-K_{\Pi(t),\,v(\lfloor t\rfloor)}|\right)}{c(t)}=0,
\end{equation}
for arbitrary fixed $T>0$. In order to prove \eqref{eq:depoisson_proof2} we estimate the supremum as follows:
$$\sup_{s\in [t-T\sqrt{t},\,t+T\sqrt{t}]}|K_{\Pi(s),\,v(\lfloor t\rfloor )}-K_{\Pi(t),\,v(\lfloor t\rfloor)}|\leq \sum_{k\geq 1}\sup_{s\in [t-T\sqrt{t},\,t+T\sqrt{t}]}|\1_{\{\mathcal{Z}_{\Pi(s),\,k}=v(\lfloor t\rfloor )\}}-\1_{\{\mathcal{Z}_{\Pi(t),\,k}=v(\lfloor t\rfloor )\}}|.
$$

Note that the $k$-th summand on the right-hand side can be non-zero only in the following two scenarios:
\begin{itemize}
\item the number of balls in the $k$-th box at time $t-T\sqrt{t}$ was strictly smaller than $v(\lfloor t\rfloor)$ but at time $t+T\sqrt{t}$ it became larger or equal than $v(\lfloor t\rfloor)$, that is, at {\it some} epoch during $[t-T\sqrt{t},\,t+T\sqrt{t}]$ the number of balls in the $k$-th box increased to $v(\lfloor t\rfloor)$;
\item at least one ball has fallen during $[t-T\sqrt{t},\,t+T\sqrt{t}]$ in the $k$-th box which contained exactly $v(\lfloor t\rfloor)$ balls at time $t-T\sqrt{t}$.
\end{itemize}
Thus,
\begin{multline*}
\sup_{s\in [t-T\sqrt{t},\,t+T\sqrt{t}]}|K_{\Pi(s),\,v(\lfloor t\rfloor )}-K_{\Pi(t),\,v(\lfloor t\rfloor)}|\leq\\
\sum_{k\geq 1}\left(\left(\sum_{j=0}^{v(\lfloor t\rfloor)-1}\1_{\{\mathcal{Z}_{\Pi(t-T\sqrt{t}),\,k}=j,\mathcal{Z}_{\Pi(t+T\sqrt{t}),\,k}\geq v(\lfloor t\rfloor )\}}\right)+\1_{\{\mathcal{Z}_{\Pi(t-T\sqrt{t}),\,k}=v(\lfloor t\rfloor ),\mathcal{Z}_{\Pi(t+T\sqrt{t}),\,k}> v(\lfloor t\rfloor )\}}\right),
\end{multline*}
and thereupon
\begin{align*}
&\E\left(\sup_{s\in [t-T\sqrt{t},\,t+T\sqrt{t}]}|K_{\Pi(s),\,v(\lfloor t\rfloor )}-K_{\Pi(t),\,v(\lfloor t\rfloor)}|\right)\leq\\
&\hspace{0.6cm}\leq\sum_{k\geq 1} \sum_{j=0}^{v(\lfloor t\rfloor)-1}\P\{\mathcal{Z}_{\Pi(t-T\sqrt{t}),\,k}=j,\mathcal{Z}_{\Pi(t+T\sqrt{t}),\,k}\geq v(\lfloor t\rfloor)\}\\
&\hspace{0.6cm}+ \sum_{k\geq 1}\P\{\mathcal{Z}_{\Pi(t-T\sqrt{t}),\,k}=v(\lfloor t\rfloor),\mathcal{Z}_{\Pi(t+T\sqrt{t}),\,k} > v(\lfloor t\rfloor)\}\\
&\hspace{0.6cm}=\sum_{k\geq 1}\P\{v(\lfloor t\rfloor)-(\mathcal{Z}_{\Pi(t+T\sqrt{t}),\,k}-\mathcal{Z}_{\Pi(t-T\sqrt{t}),\,k})\leq \mathcal{Z}_{\Pi(t-T\sqrt{t}),\,k}\leq v(\lfloor t\rfloor)-1\}\\
&\hspace{0.6cm}+\sum_{k\geq 1}\P\{\mathcal{Z}_{\Pi(t-T\sqrt{t}),\,k}=v(\lfloor t\rfloor),\mathcal{Z}_{\Pi(t+T\sqrt{t}),\,k}-\mathcal{Z}_{\Pi(t-T\sqrt{t}),\,k} \geq 1\}.
\end{align*}
Recall that, conditional on $S$, $(\mathcal{Z}_{\Pi(s),\,k})_{s\geq 0}$ is a Poisson process and in particular has independent and stationary increments. Thus, exploiting definition \eqref{eq:rho_def} of the function $\rho$ and Markov's inequality for the second summand, we obtain
\begin{align*}
&\E\left(\sup_{s\in [t-T\sqrt{t},\,t+T\sqrt{t}]}|K_{\Pi(s),\,v(\lfloor t\rfloor )}-K_{\Pi(t-T\sqrt{t}),\,v(\lfloor t\rfloor)}|\right)\\
&\hspace{0.3cm}\leq \E\sum_{k\geq 1}\P\{v(\lfloor t\rfloor)-(\mathcal{Z}_{\Pi(t+T\sqrt{t}),\,k}-\mathcal{Z}_{\Pi(t-T\sqrt{t}),\,k})\leq \mathcal{Z}_{\Pi(t-T\sqrt{t}),\,k}\leq v(\lfloor t\rfloor)-1|S\}\\
&\hspace{0.3cm}+\E\sum_{k\geq 1}\P\{\mathcal{Z}_{\Pi(t-T\sqrt{t}),\,k}=v(\lfloor t\rfloor)|S\}\P\{\mathcal{Z}_{\Pi(t+T\sqrt{t}),\,k}-\mathcal{Z}_{\Pi(t-T\sqrt{t}),\,k} \geq 1|S\}\\
&\hspace{0.3cm}= \int_{(0,\,\infty)}\P\{v(\lfloor t\rfloor)-{\rm Poi}^{\prime}(2T\sqrt{t}x) \leq {\rm Poi}((t-T\sqrt{t})x)\leq  v(\lfloor t\rfloor)-1\}{\rm d}(-\E\rho(x))\\
&\hspace{0.3cm}+\int_{(0,\,\infty)}e^{-x(t-T\sqrt{t})}\frac{x^{v(\lfloor t\rfloor)}(t-T\sqrt{t})^{v(\lfloor t\rfloor)}}{v(\lfloor t\rfloor)!}(2Tx\sqrt{t}){\rm d}(-\E\rho(x))\\
&\hspace{0.3cm}=:P_1(t)+P_2(t),
\end{align*}
where ${\rm Poi}$ and ${\rm Poi}^{\prime}$ are independent Poisson random variables. We first deal with $P_2(t)$. By Theorem 5.1 in \cite{Gnedin+Pitman+Yor:2006}
\begin{equation}\label{eq:exp_rho_reg_var}
\E\rho(x)~\sim~{\rm const}\cdot x^{-\alpha}\ell(1/x),\quad x\to\infty,
\end{equation}
where here and below ``${\rm const}$'' denotes some positive constants which does not depend on $x$ and/or $t$ but might depend on all other parameters. Let $(s(t))_{t\geq 0}$ be such that $s(t)-T\sqrt{s(t)}=t$ for all $t\geq 0$. Lemma \ref{lem:karamata_ls} applied with $U(x)=\E\rho(x)$, $t$ replaced by $s(t)$ and $q(t)=v(\lfloor s(t)\rfloor)+1$, yields
\begin{multline*}
P_2(s(t))=\frac{2T\sqrt{s(t)}(v(\lfloor s(t)\rfloor)+1)}{t}\int_{(0,\,\infty)}e^{-xt}\frac{(tx)^{v(\lfloor s(t)\rfloor)+1}}{(v(\lfloor s(t)\rfloor)+1)!}{\rm d}\left(-\E\rho (x)\right)\\
\sim~{\rm const}\cdot\frac{2T\sqrt{s(t)}(v(\lfloor s(t)\rfloor)+1)}{t}\alpha t^{\alpha}\ell(t/(v(\lfloor s(t)\rfloor)+1))/(v(\lfloor s(t)\rfloor)+1)^{1+\alpha},\quad t\to\infty.
\end{multline*}
Since $v$ is regularly varying both in the settings of Theorem \ref{thm:main} and Theorem \ref{thm:main2}, the relation $s(t)~\sim~t$, as $t\to\infty$, implies $v(\lfloor s(t)\rfloor)+1 ~\sim~v(t)$ and thereupon
$$
P_2(s(t))~\sim~{\rm const}\cdot \frac{t^{\alpha-1/2}\ell(t/v(t))}{(v(t))^{\alpha}},\quad t\to\infty.
$$
If $v=r_i$, then the index of regular variation of $v$ is $\alpha/(\alpha+1)<1/2$, see Remark \ref{rem:rer_var_r} and thereupon
\begin{equation}\label{eq:depoisson_proof3}
\lim_{t\to\infty}\frac{t^{\alpha-1/2}\ell(t/r_i(t))}{r_i^{\alpha}(t)}=0=\lim_{t\to\infty}\frac{t^{\alpha-1/2}\ell(t/r_i(t))}{r_i^{\alpha}(t)c(t)}.
\end{equation}
If $v=w\in\mathcal{W}^{r}_{RV}$, that is, we are in settings of Theorem \ref{thm:main2}, then
\begin{equation}\label{eq:depoisson_proof4}
\lim_{t\to\infty}\frac{t^{\alpha-1/2}\ell(t/w(t))}{w^{\alpha}(t)c(t)}=\lim_{t\to\infty}\frac{w(t)}{t^{1/2}}=0.
\end{equation}
holds as well, since we assume $w(t)=o(r(t))$ and $r(t)=o(\sqrt{t})$, as $t\to\infty$.

In order to estimate $P_1(t)$ we decompose it as follows: for fixed $0<a<1<A$,
$$
P_1(t)=\left(\int_{(0,\,av(\lfloor t\rfloor)/t)}+\int_{[av(\lfloor t\rfloor)/t,\,Av(\lfloor t\rfloor)/t]}+\int_{(Av(\lfloor t\rfloor)/t,\,\infty)}\right)\cdots~=:~P_{11}(t)+P_{12}(t)+P_{13}(t).
$$
As far as $P_{12}(t)$ is concerned  we have, by the stochastic monotonicity of ${\rm Poi}(\lambda)$ in parameter $\lambda$,
$$
P_{12}(t)\leq \int_{[av(\lfloor t\rfloor)/t,\,Av(\lfloor t\rfloor)/t]}\P\{v(\lfloor t\rfloor)-{\rm Poi}^{\prime}(2ATv(\lfloor t\rfloor)/\sqrt{t})\leq{\rm Poi}((t-T\sqrt{t})x)\leq  v(\lfloor t\rfloor)-1\}{\rm d}(-\E\rho(x)).
$$
Pick $M\in\N$ so large that $\alpha<M/(M+2)$ and note that
\begin{align*}
P_{12}(t) &\leq \P\{{\rm Poi}^{\prime}(2ATv(\lfloor t\rfloor)/\sqrt{t})\geq M\}\int_{[av(\lfloor t\rfloor)/t,\,Av(\lfloor t\rfloor)/t]}{\rm d}(-\E\rho(x))\\
&+\sum_{j=1}^{M-1}\P\{{\rm Poi}^{\prime}(2ATv(\lfloor t\rfloor)/\sqrt{t})=j\}\sum_{i=1}^{j}\int_{[av(\lfloor t\rfloor)/t,\,Av(\lfloor t\rfloor)/t]}\P\{{\rm Poi}((t-T\sqrt{t})x)=v(\lfloor t\rfloor)-i\}{\rm d}(-\E\rho(x))\\
&\leq {\rm const}\cdot \left(\frac{v(t)}{\sqrt{t}}\right)^{M}\E \rho(av(\lfloor t\rfloor)/t)\\
&+{\rm const}\cdot\sum_{j=1}^{M-1}\left(\frac{v(t)}{\sqrt{t}}\right)^{j}\sum_{i=1}^{j}\int_{(0,\,\infty)}\P\{{\rm Poi}((t-T\sqrt{t})x)=v(\lfloor t\rfloor)-i\}{\rm d}(-\E\rho(x)),
\end{align*}
where the bound $\P\{{\rm Poi}(\lambda)\geq M\}\leq \lambda^M$ has been utilized for the estimate of $\P\{{\rm Poi}^{\prime}(2ATv(\lfloor t\rfloor)/\sqrt{t})\geq M\}$. The  first term goes to zero, as $t\to\infty$, by \eqref{eq:exp_rho_reg_var} and the choice of $M$. By Lemma \ref{lem:karamata_ls} every integral in the second term is $O(t^{\alpha}\ell(t/v(t))/(v(t))^{\alpha+1})$ (with a possibly dependent on $j=1,\ldots,M$ constant in the Landau symbol). Thus, by \eqref{eq:depoisson_proof3} and \eqref{eq:depoisson_proof4} all summands in the second term tend to zero upon division by $c(t)$, as $t\to\infty$.

For the estimates of $P_{11}(t)$ and $P_{13}(t)$ we employ known bounds for Poisson tail probabilities borrowed from \cite{Glynn:1987}.
By part (ii) of Proposition 1 in \cite{Glynn:1987}, we have, for large enough $t>0$,
\begin{align*}
P_{11}(t)&\leq \int_{(0,\,av(\lfloor t\rfloor)/t)}\P\{v(\lfloor t\rfloor)\leq {\rm Poi}((t+T\sqrt{t})x)\}{\rm d}(-\E\rho(x)),\\
& \leq\int_{(0,\,av(\lfloor t\rfloor)/t)}\left(1-\frac{(t+T\sqrt{t})x}{1+v(\lfloor t\rfloor)}\right)^{-1}\P\{{\rm Poi}((t+T\sqrt{t})x)=v(\lfloor t\rfloor)\}{\rm d}(-\E\rho(x))\\
&\leq {\rm const}\cdot\int_{(0,\,av(\lfloor t\rfloor)/t)}\P\{{\rm Poi}((t+T\sqrt{t})x)=v(\lfloor t\rfloor)\}{\rm d}(-\E\rho(x)),
\end{align*}
and this converges to zero exponentially fast, which is readily seen upon integration by parts, the bound
$$
\left|\frac{{\rm d}\P\{{\rm Poi}(\lambda)=j\}}{{\rm d}\lambda}\right|\leq \P\{{\rm Poi}(\lambda)=j\}+\P\{{\rm Poi}(\lambda)=j-1\},\quad j\in\N,
$$
and \eqref{eq:karamata_proof1bis}. Similarly, by Proposition 1(i) in \cite{Glynn:1987}
\begin{align*}
P_{13}(t)&\leq \int_{(Av(\lfloor t\rfloor)/t,\,\infty)}\P\{{\rm Poi}((t-T\sqrt{t})x)\leq v(\lfloor t\rfloor)\}{\rm d}(-\E\rho(x))\\
& \leq\int_{(Av(\lfloor t\rfloor)/t,\,\infty)}\left(1-\frac{v(\lfloor t\rfloor)}{(t+T\sqrt{t})x}\right)^{-1}\P\{{\rm Poi}((t+T\sqrt{t})x)=v(\lfloor t\rfloor)\}{\rm d}(-\E\rho(x))\\
&\leq {\rm const}\cdot\int_{(Av(\lfloor t\rfloor)/t,\,\infty)}\P\{{\rm Poi}((t+T\sqrt{t})x)=v(\lfloor t\rfloor)\}{\rm d}(-\E\rho(x)),
\end{align*}
and this also converges to zero in view of \eqref{eq:karamata_proof1bisbis}. This finishes the proof of \eqref{eq:depoisson_proof2} and of Theorem \ref{thm:main}.

Using similar arguments it can be checked that \eqref{eq:to_zero_poisson} implies \eqref{eq:to_zero}.

\section{Acknowledgments}

The work of Alexander Marynych has received funding from the Ulam Program of the Polish National Agency for Academic Exchange (NAWA), project no. PPN/ULM/2019/1/00004/DEC/1. D.~Buraczewski was  partially supported by the National Science Center, Poland (grant number  2019/33/ B/ST1/00207). The authors would like to thank Zakhar Kabluchko for many helpful discussions. \textcolor{black}{We are most grateful to two anonymous referees for many valuable comments which have led to significant improvements of the presentation of our work.}

\end{document}